\documentclass[reqno]{amsart}
\usepackage[utf8]{inputenc}

\usepackage{comment}
\usepackage{bbm}
\usepackage[foot]{amsaddr}

\usepackage{geometry}
\geometry{left=2.5cm,right=2.5cm}
\usepackage{graphicx}
\graphicspath{{Figures/}}
\usepackage[usenames,dvipsnames]{xcolor}
\usepackage{dsfont,url}
\usepackage[american]{babel}

%
\newtheorem{theorem}{Theorem}

\newtheorem{assumption}{Assumption}

\newtheorem{definition}[theorem]{Definition}

\newtheorem{lemma}[theorem]{Lemma}

\newtheorem{remark}{Remark}

\def\qed{\hbox{${\vcenter{\vbox{		 
   \hrule height 0.4pt\hbox{\vrule width 0.5pt height 6pt
   \kern5pt\vrule width 0.5pt}\hrule height 0.4pt}}}$}}


\def\cB{\mathcal B}

\def\cF{\mathcal F}

\def\cH{\mathcal H}

\def\bE{\mathbb E}

\def\bP{\mathbb P}

\def\bR{\mathbb R}

\def\eps{\varepsilon}

%

%

%
\def\ER{Erd\H{o}s-R\'enyi }


%

\def\diag{\mathop{\rm diag}}

\newcommand{\so}[1]{\textcolor{black}{#1}}
\newcommand{\ste}[1]{\textcolor{black}{#1}}

\newcommand{\rev}[1]{\textcolor{black}{#1}}

\begin{document}

\title{A stochastic differential equation SIS model on network under Markovian switching}
\author{Stefano Bonaccorsi, Stefania Ottaviano}

\address{Stefano Bonaccorsi, \newline
Dipartimento di Matematica, Universit\`a degli studi di
   Trento, via Sommarive 14, 38123 Povo (Trento), Italy
}

\address{Stefania Ottaviano, \newline
Dipartimento di Matematica “Tullio Levi Civita”, Universit\`a degli studi di
    Padova, Viale Trieste 63, 35131 Padova\\
Dipartimento di Matematica, Universit\`a degli studi di
   Trento, via Sommarive 14, 38123 Povo (Trento), Italy
}
\email{stefano.bonaccorsi@unitn.it}
\email{stefania.ottaviano@unipd.it}


\markboth{living document,~\today}%
{S.Bonaccorsi}

\subjclass[2000]{37H30, 60H10, 05C90, 92D30.}
\keywords{Susceptible-infected-susceptible model, Hybrid diffusion systems, Networks, Extinction, Stochastic permanence, Ergodicity}

\begin{abstract}
We study a stochastic SIS (susceptible-infected-susceptible) epidemic dynamics on network, under the effect of a Markovian regime-switching. We first prove the existence of a unique global positive solution, and find a positive invariant set for the system. 
Then, we find sufficient conditions for a.s. extinction and stochastic permanence, showing also their relation with the stationary probability distribution of the Markov chain that governs the switching \rev{and with the network topology}. We provide an asymptotic lower bound for the time average of
the 
{sample-path solution}
under the conditions ensuring stochastic permanence. From this bound, we are able to prove the existence of an invariant probability measure if the condition of stochastic permanence holds. \ste{Under a different condition, we prove the positive recurrence and the ergodicity of the regime-switching diffusion.}
\end{abstract}

\maketitle

\section{Introduction}

Epidemic processes are often subject to environmental noise. It is therefore useful to understand how the noise influences the epidemic systems.
To this aim, it is interesting to consider randomly switching dynamical systems, since in the
real biological systems some parameters of the model are usually influenced by random variation, 
thus they are not constant in time. For example, the disease transmission rate in some epidemic models is influenced by external meteorological factors linked to the survival of many bacteria and viruses \cite{arundel1986indirect,ud2010structural}.

Moreover, also the possible routes of disease transmission among the population may evolve in time.
In large populations each individual
only interacts with a few others, and these connections determine the possible dynamics of the epidemics. 
\rev{Although one  common assumption in other population models is the homogeneity of interactions between individuals, 
we may object that homogeneous mixing ignores important social structures, such as presence of communities, or the specific role of the individuals, often leading to inaccurate reconstructions and predictions of the route of epidemics.
Therefore, the use of networks to describe the contact pattern represents a major advance in our
ability to model realistic social behaviours. 
Network-based models have succeeded in explaining characteristic
patterns observed in recent real epidemics (e.g., in the 2009 H1N1 influenza, the 2014-2016 Ebola epidemic in West
Africa and the COVID-2019 pandemic) that classical homogeneous epidemic models are not able
to catch, such as an early sub-exponential growth of new cases, cluster transmission and superspreading
events} \cite{balcan2009seasonal,ajelli20152014,faye2015chains,who2016after,thurner2020network,gatto2020spread}.\\
In network-based models, individuals are represented by
nodes, with edges (or links) depicting the interactions between them. 
\rev{However, complex and heterogeneous connectivity patterns emerge also in a
wide range of biological and other socio-technical systems: biological
system are the result of biochemical reactions; network structure can be recognizable
even in the Internet, or in an electric power grid, in the physical
layer of the telecommunication systems, in highways and subways systems or
neural networks \cite{boccaletti2006,ottaviano2016influence}}.
\rev{We can also see the nodes as municipalities/metropolitan areas, country regions, airports, or anyway like any homogeneous population that shares some common property (as, e.g., age), and consider, possibly, edges equipped with a weight 
as indicators of the transmission chance between nodes.
However, in many realistic scenarios, links and, if considered, weights should be treated as dynamical variables possibly affected by random disturbance.}
Thus, the network structure, described for instance by the connectivity matrix, varies in time, and its variation shall be modelled by a stochastic process;
switching on and off some edges can be made according to a suitable
random law that leads to a change of the epidemic dynamics as time goes on.

In epidemiology, many authors have considered random switching systems, whose distinctive feature is the coexistence of a continuous dynamics and discrete events (random jumps
at points in time). 
In \cite{gray2012sis}, the authors discuss the effect of a random switching environment on the SIS epidemic
model using a finite state Markov chain. Other kinds of epidemic models under Markovian regime switching are studied in \cite{greenhalgh2016modelling,li2017threshold}. Nevertheless, aforementioned
epidemic models are based on the traditional homogeneous compartment models which do not take into
account the topology of social contact networks.
 With regard to the spread of the epidemic in networks under regime switching in \cite{prakash2010virus} and \cite{sanatkar2015epidemic}, the authors study the switching
between networks in discrete-time and continuous-time respectively, getting an epidemic
threshold. The networks switching they used is deterministic and periodic. A random switching mechanism is considered, e.g., in \cite{ogura2015disease,cao2019n}, where the authors investigate the so-called switched $N$-intertwined mean-field model. Basically, they consider a piece-wise deterministic Markov process, where the deterministic continuous part is governed by the $N$-intertwined mean-field equation \cite{van2009virus}.\\
In recent years, there has been also a growing interest in hybrid diffusion systems (often called regime-switching diffusions) \cite{zhu2007asymptotic}, where 
the continuous component between jumps follows a stochastic differential equation. Thus, we have a diffusion process living in a random environment (depicted by the finite-state jumping process).
Regime-switching diffusions
provide a more realistic description for many fields of application, including epidemiology and population dynamics \cite{han2013stochastic,zhang2016stationary,jin2018asymptotic, lin2019ergodicity,wang2019dynamical,lan2019stochastic,li2009population,du2016conditions, el2021stochastic,wei2021bifurcation}. \rev{Recently, the long-term behavior of a regime-switching diffusion SIS epidemic model, obtained by combining the model in \cite{cai2019stochastic} with that in \cite{gray2012sis}, has been investigated in \cite{cai2021stochastic}.
A class of SIS hybrid diffusion models is studied also in \cite{tran2021optimal}, where the authors focus on an optimal control problem. }

 However, to the best of our knowledge, hybrid diffusion epidemic models that consider a network structured population are still missing. Thus, our work is the first attempt to fill this gap.
\rev{ Precisely, we will consider an SIS epidemic model on networks, where the continuous part, accounting for the evolution of the infection probability of each node, evolves according to a stochastic differential equations, while the regime-switching is governed by a finite-state Markov chain}.

\rev{In the SIS model a node can be repeatedly infected, recover, and yet be infected again. Thus, it can be used to describe diseases that do not confer permanent immunity, such as 
some sexually transmitted and bacterial
diseases as gonhorrea, tubercolosis, and Streptococcus pneumoniae 
\cite{lajmanovich1976deterministic, Gray2011,gray2012sis}. We underline that the SIS model can be used also for describing diffusion of some computer viruses \cite{kephart1992directed,van2009virus}. \\
Moreover, the regime-switching approach is well suited to represent meteorological factors that influence the survival of certain bacteria and viruses, but also to represent the switching between different serotypes (or strains) leading to different infectivities and infectious periods (see, e.g., \cite[Example 8.1]{gray2012sis}).
 The regime-switching in our model is suited also for
the change in the route of transmission, encoded by randomly switching on and off some edges of the social contact network.
In the next subsection, we will precisely define our model and the results obtained.}


  %

\subsection{The model}
 
\rev{ Let us consider the NIMFA (N-intertwined mean-field approximation) model proposed in \cite{van2009virus} for an agent-based SIS epidemic. The model was subsequently extended to other classes of epidemic models such as SIR and SIRS in \cite{SIRscoglio} and \cite{ottaviano2021some}, respectively.\\
 In the case of the NIMFA-SIS model, the dynamics of the infection probability of each node, in a static undirected connected graph, is governed by the following system of nonlinear ODEs}:

\begin{align}\label{nimfa}
\frac{d x(t)}{dt} &=( \beta A -\delta I) x(t) - \beta \diag {(x_i(t))} A x(t),
\end{align}
where $x(t) =(x_1(t), x_2(t), . . . , x_N(t))^T$ is the vector whose component $x_i(t)$ represents
the probability that the $i$-th node is infected at time $t$. The parameters $\beta$ and $\delta$ represent the infection and recovery rates, respectively, $A$ is the adjacency matrix of the network, and $I$ the $N \times N$ identity matrix.


 First, we consider the random switching of the environment between two or more states. In each different state the parameters takes different values and the network changes its link set.
  We assume that the switching is memoryless, i.e., the waiting time for the next
switch has an exponential distribution. 

In this paper, unless otherwise specified, let $(\Omega,\mathcal{F}, \{\mathcal{F}_t\}_{ t\geq 0}, \bP)$ be a complete
probability space with a filtration $\{\mathcal{F}_t\}_{ t\geq 0}$ satisfying the usual
conditions (i.e. it is right continuous and $\cF_0$ contains all $\bP$-null
sets).\\
The infection probabilities dynamics in \eqref{nimfa} under regime switching can  be
described by the following stochastic system 

\begin{align}\label{sis_state}
\frac{d x(t)}{dt} &=( \beta(s(t)) A(s(t)) -\delta(s(t)) I) x(t) - \beta(s(t)) \diag {(x_i(t))} A(s(t))x(t),
\end{align}
 where the values of $\beta$, $\delta$ and $A$ change according to a homogeneous continuous-time Markov chain $\{ s(t), t \geq 0\}$ defined on the probability space, taking values in a finite state space $S=\{1, 2, \ldots m\}$, representing different regimes. 
The Markov chain $s(t)$ is generated by the transition rate matrix $Q=(q_{w z})_{m \times m}$, i.e., 

\begin{equation*}
\bP\{s(t+ \Delta t)= z| s(t)=w\}=\begin{cases}
q_{wz}\Delta t + o(\Delta t), \qquad \text{if} \quad w \neq z,\\
1+ q_{wz}\Delta t + o(\Delta t), \qquad \text{if} \quad w=z,
\end{cases}
\end{equation*}
where $\Delta t >0$, $q_{w z}$ is the transition rates from state $w$ to state $z$, and $q_{wz}\geq 0$ if $w \neq z$, while $q_{ww}= -\sum_{w \neq z} q_{w,z}$.
We assume that the Markov chain $s(t)$ is irreducible. Under this condition, the Markov chain has a unique stationary positive probability distribution $\pi=(\pi_1, \ldots, \pi_m)^T$ which can be determined by solving the following linear equation $\pi^T Q =0$, subject to $\sum_{s=1}^m \pi_z=1$, and $\pi_s >0$, $\forall s \in S$.\\\\


We also consider random fluctuations, that continuously affect the evolution of the process between consecutive jumps. Indeed, model parameters may have great variability, depending on observed and measured data, that are fraught with errors and uncertainty, or that simply derive from random fluctuations of the environment. To model this scenario, we consider white noise (see e.g., \cite{bonaccorsi2016epidemics,mao2002environmental,widder2014heterogeneous,li2009population}). 
 White noise is a useful mathematical idealization for representing stochastic disturbances fluctuating rapidly, which  are assumed to be uncorrelated for different instant of time \cite{Arnold}.
 
 We assume that the rate of exposure to the infection, for each individual, varies around a common average value under the action of a family of independent, identically distributed Brownian motions. In this way, we incorporate also a more realistic heterogeneity in the transmission of the infection, that may depends on many different aspects, such as genetics, immune system, or social behaviors.\\
Precisely, let us consider a standard $N$-dimensional Brownian motion $W(t) = (w_1(t), \ldots, w_N(t))$ defined on its stochastic basis $(\Omega,\mathcal{F}, \{\mathcal{F}_t\}_{ t\geq 0}, \bP)$. 
Then, we assume that each node $i$ can be infected from its neighbors by a rate described by a stochastic process of the form
\begin{align*}
 \beta(s(t)) + \sigma_i(x_i(t),s(t)) \, \dot w_i(t),
\end{align*}
 where $\dot w_i(t)$ is the white-noise mapping (see e.g., \cite[Chapter 3]{Arnold}). 
 The functions $\sigma_i: \mathbb{R} \times S \rightarrow [0, + \infty)$, {that provide the intensity of the noise
 for each node}, 
are locally Lipschitz continuous and bounded, and satisfy  
 \begin{equation}\label{cond}
\sup_{x \in (0,1)} \frac{\sigma_i(x,s)}{x} \le M(s), \quad \text{for every\ } i = 1, \dots, N, \; s \in S.
\end{equation}
 where $M(s)$ is a positive constant depending on the state of the Markov chain.
This choice implies that the intensity of the infection rate varies around a mean value $\beta(s)$, and the disturbance is small if the  value of the probability of infection is small.
Let us note that the choice of a noise whose intensity is independent of the epidemic level would have implied a random variability of the solution also near the \rev{zero-infection probability state, thus allowing the solution to have some negative components. 
In this case, the solution would enter in a region that has neither physical nor mathematical meaning}.
Moreover, {we require that $\sigma_i(x_i,s) \neq 0$, if $x_i \in (0,1)$.}\\
Thus, by perturbing system \eqref{sis_state}, we obtain the It\^o stochastic differential equation

\begin{equation}
\label{ito}
\begin{aligned}
&{\mathrm d} x_i(t) = \left[\beta(s(t)) b_i(x(t),s(t)) (1 - x_i(t)) - \delta(s(t)) x_i(t)\right] \, {\mathrm d}t
 + \sigma_i(x_i(t),s(t)) b_i(x(t),s(t)) (1 - x_i(t)) \, {\mathrm d}w_i(t),
\\
& b_i(x(t),s(t)) = \sum_{j=1}^N a_{ij}(s(t)) x_j(t), \qquad i \in \{1, \dots, N\}
\end{aligned}
\end{equation}
with a given vector of initial conditions $x_0=(x_1(0), \dots, x_N(0))$ and a given initial state $s_0=s(0)$. 
We assume that the Markov chain $s(\cdot)$ is independent of the $N$-dimensional Brownian motion $W(\cdot)$.\\ Thus, system \eqref{ito} is composed  of $m$ subsystems, and it continues to switch between them according to the law of the Markov Chain.
In the vector-valued form the above stochastic differential equation becomes
\begin{equation*}\label{sdevect}
\begin{aligned}
&{\mathrm d} x(t)=  f(x(t),s(t)) \, {\mathrm d}t + g(x(t),s(t)) \, {\mathrm d}W(t),  
\\
& (x_1(0), \dots, x_N(0),s_0),
\end{aligned}
\end{equation*}
where 
$f(x(t),s(t))$ and $g(x(t),s(t))$ are functions taking values in $\mathbb{R}^N$ and $L(\bR^N,\bR^N)$, respectively. The $i$-th component of $f$ is 

\begin{equation}\label{fii}
f_i(x(t),s(t))=\beta(s(t)) b_i(x(t),s(t)) (1 - x_i(t)) - \delta(s(t)) x_i(t),
\end{equation}
whereas $g$ is a diagonal matrix with entries   

\begin{equation}\label{gii}
g_{ii}(x(t),s(t)) =\sigma_i(x_i(t),s(t)) b_i(x(t),s(t)) (1-x_i(t)).
\end{equation}

Moreover, let us denote $\bR_+:=[0,\infty)$, and let $C^{2,1}(\bR^{N} \times \bR_+ \times S; \bR_+)$ be the family of all non-negative real-value functions $V(x,t,s)$ on $\bR^N \times \bR_+ \times S$, which are 
continuously twice differentiable in $x$ and once differentiable
in $t$. 
We define the differential operator $L$ on functions
 $V \in C^{2,1}(\bR^{N} \times \bR_+ \times S; \bR_+)$
 by setting
\begin{equation}\label{LV}
LV(x,t,s) = \frac{\partial V(x,t,s)}{\partial t}+ \sum_{i=1}^N  \frac{\partial V(x,t,s)}{\partial_{x_i}} f_i(x,s) + \frac{1}{2}\sum_{i=1}^N \frac{\partial^2 V(x,t,s)}{\partial x_i \partial x_i}  g_{ii}(x,s)^2 
+ \sum_{r=1}^m q_{sr} V(x,t,r).  
\end{equation}

In the following sections, we study the dynamic properties of the model described above. Precisely, in Section \ref{sec:extinction}, we show that there exists a positive global
solution for the system \eqref{ito}: the result is achieved by proving a somehow stronger property, namely that the solution never leaves the domain $(0,1)^N$, provided it starts inside it. Afterwards, we give a
sufficient condition for the almost sure extinction, and in Section \ref{sec:permanence} sufficient conditions for the stochastic permanence, meaning that the epidemics will survive indefinitely in the population with positive probability. We show that the mentioned conditions have closed relations with the stationary probability
distribution of the Markov chain \rev{and with the topology of the network}. In Section \ref{sec:invar}, we estimate the limit of time average 
of the sample path of the solution, \rev{providing the persistence in time mean of the system}. 
Then, based on this estimate, we prove the existence of an invariant probability measure of the {Markov process} $(x(t),s(t))$ on $(0,1)^N$. Under different condition, we prove the positive recurrence of $(x(t),s(t))$ and its ergodic properties in $(0,1)^N$. 
Most of the results in this paper are obtained through Lyapunov functions techniques, that were developed for stochastic
differential equations by Khasminskii \cite{khasminskii2011}, and later employed by many other authors (see, e.g., \cite{Arnold, Gard,mao2006stochastic}).
\rev{This kind of investigation 
has applications, e.g., in optimization and optimal control problems
\cite{zhu2007asymptotic,yin2009hybrid,ghosh1997ergodic}.}

\section{Global solution and extinction}\label{sec:extinction}

For a stochastic differential equation with Markovian switching, conditions assuring a unique global (i.e., no explosion in a finite
time) solution, for any given initial data, involve linear growth
 and local Lipschitz continuity of the coefficients of the equation \cite{mao2006stochastic}.
In our case, the coefficients of system \eqref{ito} are locally Lipschitz continuous, but they do not satisfy the linear growth condition, thus the solution may explode in a finite time. Thus, in the following theorem we ensure the no explosion in any finite time, by proving a somehow stronger property, that is $(0,1)^N$ is a positive invariant domain for \eqref{ito}.

\begin{theorem}\label{thm:01}
For any initial condition $x_0 \in (0,1)^N$ and $s_0 \in S$, and for any choice of system parameters $\beta(\cdot)$, $\delta(\cdot)$, any $A(\cdot)$ and $M(\cdot)$ in condition \eqref{cond}, there exists a unique global solution to system \eqref{ito}
on $t \geq 0$ and the solution remains in $(0,1)^N$ almost surely for all $t \geq 0$.
\end{theorem}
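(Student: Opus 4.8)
The plan is to follow the classical Khasminskii--Mao localization scheme, since the coefficients \eqref{fii}--\eqref{gii} are locally Lipschitz but violate the linear growth condition. First I would invoke the standard local existence and uniqueness theory for stochastic differential equations with Markovian switching: because $f$ and $g$ are locally Lipschitz on $(0,1)^N\times S$, for every $x_0\in(0,1)^N$ and $s_0\in S$ there is a unique maximal local solution, defined on a random interval $[0,\tau_e)$ and taking values in $(0,1)^N$ up to the explosion/exit time $\tau_e$. It therefore suffices to prove that $\tau_e=\infty$ almost surely, i.e. that the solution never reaches the boundary of the cube in finite time.

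To measure the approach to the boundary, fix $k_0$ so large that $x_0\in[1/k_0,\,1-1/k_0]^N$, and for each integer $k\ge k_0$ define the stopping time
\[
\tau_k=\inf\bigl\{\,t\ge 0:\ x_i(t)\notin(1/k,\,1-1/k)\ \text{for some } i\in\{1,\dots,N\}\,\bigr\}.
\]
These times are nondecreasing in $k$; set $\tau_\infty=\lim_{k\to\infty}\tau_k\le\tau_e$. The goal reduces to showing $\tau_\infty=\infty$ almost surely, for then the solution remains in $(0,1)^N$ for all $t\ge 0$ and is in particular global.

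The heart of the argument is a Lyapunov function that blows up on both faces $\{x_i=0\}$ and $\{x_i=1\}$. I would take
\[
V(x)=\sum_{i=1}^N\bigl(-\log x_i-\log(1-x_i)\bigr),
\]
which lies in $C^{2,1}(\bR^N\times\bR_+\times S;\bR_+)$ (it depends on neither $t$ nor $s$) and satisfies $V(x)\ge 2N\log 2>0$. Because $V$ is independent of $s$, the switching term $\sum_r q_{sr}V(x,t,r)$ in \eqref{LV} vanishes, since the rows of $Q$ sum to zero, so $LV$ reduces to the diffusion generator. The key computation is then the bound $LV\le C$ for a constant $C$ uniform over $(0,1)^N\times S$. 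The two potentially singular contributions are controlled precisely by the structure of \eqref{ito}: near $x_i=0$ the dangerous second-order term $\tfrac12 x_i^{-2}g_{ii}^2=\tfrac12 x_i^{-2}\sigma_i^2 b_i^2(1-x_i)^2$ is tamed exactly by condition \eqref{cond}, $\sigma_i(x_i,s)\le M(s)\,x_i$, which cancels the $x_i^{-2}$ singularity; near $x_i=1$ the factor $(1-x_i)^2$ in $g_{ii}^2$ cancels the $(1-x_i)^{-2}$ produced by $\partial^2_{x_i}\bigl(-\log(1-x_i)\bigr)$. The first-order terms $-\beta b_i(1-x_i)/x_i$ and $-\delta x_i/(1-x_i)$ are nonpositive (using $b_i=\sum_j a_{ij}x_j\ge 0$ on the interior) and hence harmless for an upper bound, while all remaining pieces are bounded because $b_i\le\sum_j a_{ij}(s)$, the $\sigma_i$ are bounded, and $S$ is finite. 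Verifying these cancellations carefully is the main obstacle; everything else is routine.

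Finally I would close the estimate by Dynkin's formula on $[0,\tau_k\wedge T]$, for arbitrary $T>0$:
\[
\bE\bigl[V(x(\tau_k\wedge T))\bigr]=V(x_0)+\bE\!\int_0^{\tau_k\wedge T}LV\,dt\le V(x_0)+C\,T .
\]
On the event $\{\tau_k\le T\}$ some coordinate of $x(\tau_k)$ equals $1/k$ or $1-1/k$, whence $V(x(\tau_k))\ge\log k$; combined with $V\ge 0$ this yields
\[
\bP(\tau_k\le T)\,\log k\le\bE\bigl[\mathbf{1}_{\{\tau_k\le T\}}V(x(\tau_k))\bigr]\le V(x_0)+C\,T ,
\]
so that $\bP(\tau_k\le T)\le\bigl(V(x_0)+C\,T\bigr)/\log k\to 0$ as $k\to\infty$. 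Letting $k\to\infty$ gives $\bP(\tau_\infty\le T)=0$ for every $T$, hence $\tau_\infty=\infty$ almost surely. This establishes both the global existence and uniqueness of the solution and the almost sure invariance of $(0,1)^N$.
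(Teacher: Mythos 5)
Your proof is correct, but it follows a genuinely different route from the paper. The paper's own proof never introduces a Lyapunov function: it exploits the hybrid structure directly, observing that on each inter-jump interval $[\tau_n,\tau_{n+1})$ of the finite-state chain the system is an SIS diffusion in a \emph{fixed} environment, so the invariance of $(0,1)^N$ on that interval follows from the non-switching result of \cite[Theorem 1]{bonaccorsi2016epidemics}; since the chain has a.s.\ finitely many jumps in any bounded time interval, an induction over the jump times concatenates these pieces into a unique global solution that stays in $(0,1)^N$. Your argument is instead the self-contained Khasminskii--Mao localization: the computation that $LV\le C$ uniformly on $(0,1)^N\times S$ for $V(x)=\sum_{i=1}^N\bigl(-\log x_i-\log(1-x_i)\bigr)$ is exactly right --- the $x_i^{-2}$ singularity in the second-order term is cancelled by condition \eqref{cond}, the $(1-x_i)^{-2}$ singularity by the factor $(1-x_i)^2$ in \eqref{gii}, the singular first-order terms have favorable sign, and the switching term $\sum_r q_{sr}V$ vanishes because $V$ does not depend on $s$ --- and the Dynkin estimate $\bP(\tau_k\le T)\le\bigl(V(x_0)+CT\bigr)/\log k$ closes the proof. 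What each approach buys: the paper's proof is shorter but uses the earlier non-switching theorem as a black box (whose proof is, in essence, your computation without the switching term) and relies on the a.s.\ local finiteness of the jumps; yours is self-contained, treats the switching and the diffusion simultaneously, and yields a quantitative bound on the probability of approaching the boundary before any fixed time $T$, which the concatenation argument does not provide. One point worth making explicit in your write-up: the coefficients are locally Lipschitz on all of $\bR^N$, so the maximal local solution exists in $\bR^N$ by the standard Mao--Yuan theory, and since the cube is bounded, your $\tau_\infty$ (exit from $(0,1)^N$) occurs no later than any explosion; hence $\tau_\infty=\infty$ a.s.\ indeed delivers both global existence and invariance at once, as you claim.
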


\begin{proof}
Let $0=\tau_0 < \tau_1 < \tau_2 < \ldots, < \tau_n < \ldots$ be the jump times of the Markov chain $s(t)$, and
let $s_0 \in S$ be the starting state. Thus $s(t)=s_0$ on $[\tau_0,\tau_1)$, and the subsystem for $t \in [\tau_0, \tau_1)$ has the following form:

\begin{equation*}
\begin{aligned}
 {\mathrm d} x_i(t) =& \left[\beta(s_0) b_i(x(t),s_0) (1 - x_i(t)) - \delta(s_0) x_i(t)\right] \, {\mathrm d}t
 + \sigma_i(x_i(t),s_0) b_i(x(t),s_0) (1 - x_i(t)) \, {\mathrm d}w_i(t), \qquad i \in \{1, \dots, N\}
\end{aligned}
\end{equation*}
By \cite[Theorem 1]{bonaccorsi2016epidemics}, the above subsystem has the solution $x(t) \in (0,1)^N$, on $t \in [\tau_0, \tau_1)$ and, by continuity for $t = \tau_1$, as well. Thus, $x(\tau_1) \in (0,1)^N$ and by considering $s(\tau_1)=s_1$, the subsystem for $t \in [\tau_1, \tau_2)$
becomes

\begin{equation*}
\begin{aligned}
 {\mathrm d} x_i(t) =& \left[\beta(s_1) b_i(x(t),s_1) (1 - x_i(t)) - \delta(s_1) x_i(t)\right] \, {\mathrm d}t
 + \sigma_i(x_i(t),s_1) b_i(x(t),s_1) (1 - x_i(t)) \, {\mathrm d}w_i(t), \quad i \in \{1, \dots, N\}
\end{aligned}
\end{equation*}
Again, by \cite[Theorem 1]{bonaccorsi2016epidemics}, the above subsystem has the solution $x(t) \in (0,1)^N$, for $t \in [\tau_1, \tau_2)$ and, by continuity for $t = \tau_2$, as well. Repeating this process continuously, we obtain that the solution $x(t)$ of system \eqref{ito} remains in $(0,1)^N$ with probability one, for all $t \geq 0$.
\end{proof}

\so{\begin{remark}\label{rem:boundary}
Let $\Delta=[0,1]^N$.
{From \eqref{ito}, we can easily see that if $x_0 \in \partial \Delta \setminus \left\{\bf 0 \right\}$, then $x(0+) \in (0,1)^N$.
Consequently, from Theorem \ref{thm:01}, for all $t > 0$ 
we have that $x(t) \in (0,1)^N$ a.s.}
\end{remark}}

In the sequel, we will need a few more notations. For any $x \in \mathbb{R}^N$, we denote by $|x|$ its Euclidean norm.
The symbols ${\bf 1}$ and ${\bf 0}$, denote the $N$-dimensional
column vector with all entries equal to 1 and 0, respectively. 
Moreover, let $G=(V,E)$ be an undirected graph and $A=(a_{ij})_{i,j=1, \ldots N}$ its adjacency matrix, then the degree of the $i$-th node, $d_i={\sum_{j=1}^N} a_{ij}$, is the number of its neighbors, and we denote by $d_{min}=\displaystyle \min_{0 \leq i \leq N} \{d_i\}$ the minimum degree.
In the following, we shall indicate the spectral radius of $A(s)$, the adjacency matrix corresponding to the state $s$, with $\lambda_1(s)$. Moreover, we shall denote with $d_{\min}(s)$ the minimum degree of the graph in state $s$.

Let us note that if $x_i >0$, $i=1, \ldots, N$, it holds 
$$\left(\sum_{i=1}^N x_i^2\right)^{\frac{1}{2}} \leq \sum_{i=1}^N x_i \leq \sqrt N \left(\sum_{i=1}^N x_i^2 \right)^{\frac{1}{2}}.$$
We will use such inequality several times through the paper.

\begin{remark}
For any $s \in S$, the adjacency matrix $A(s)$ is symmetric and
satisfies, for every $x \in \bR^N$,
\begin{equation}\label{cond1}
\langle A(s) x, x \rangle \le \lambda_1(s) |x|^2,
\end{equation}

\begin{equation}\label{cond2}
\langle A(s) x, A(s) x \rangle \le \lambda_1(s)^2 |x|^2.
\end{equation}
\rev{where $\langle \cdot, \cdot \rangle$ denotes the scalar product.}

\end{remark}

In the following theorem we find a sufficient condition, \rev{involving the model parameters and the network topology, together with the stationary distribution of the Markov chain}, that ensures that the epidemic will become extinct exponentially almost surely.  

\begin{theorem}\label{thm:ext}
For any initial condition $x_0 \in (0,1)^N$ and $s_0 \in S$, the solution $x(t)$ of system \eqref{ito} has the property

\begin{equation*}
\limsup_{t \to \infty} \frac{\log |x(t)|}{t} \leq \sum_{s=1}^m \pi_s \alpha(s)  \qquad \text{a.s.},
\end{equation*}
where 
\begin{equation*}
\alpha(s)= - \delta(s) + \beta(s) \lambda_1(s) + K(s),
\end{equation*}
with $K(s)= \frac{M(s)^2\lambda_1(s)^2} {32}$, $s \in S$.\\\\
In particular, if 
\begin{equation*}
    \sum_{s=1}^m \pi_s \alpha(s) < 0, 
\end{equation*}
then
\begin{equation}\label{ext}
\lim_{t \to \infty} |x(t)|=0 \qquad \text{a.s.}
\end{equation}
\end{theorem}

\begin{proof}
 Let us define $V: (0,1)^N \rightarrow \mathbb{R}_+$,
\begin{align*}
V(x(t))= \log|x(t)|.
\end{align*}
By the generalized It\^o formula (see, \cite[Sec.2.1]{MR1020057})
), and recalling \eqref{fii}, and \eqref{gii} we have:

\begin{align*}
{\mathrm d} V(x(t)) &= \sum_{i=1}^N \frac{x_i(t)}{|x(t)|^2}  \left(f_i(x(t),s(t))
{\mathrm d}t + 
g_{ii}(x(t),s(t)) {\mathrm d} w_i(t)\right) \\
 & + \frac{1}{2}\sum_{i=1}^N \left( \frac{1}{|x(t)|^2}   - \frac{ 2 x_i(t)^2}{|x(t)|^4} \right) g_{ii}(x(t),s(t))^2 {\mathrm d}t 
\end{align*}
from where we obtain
\begin{equation}\label{dVlog}
{\mathrm d} V(x(t))  \leq\sum_{i=1}^N \frac{x_i(t)}{|x(t)|^2}  \left(f_i(x(t),s(t)){\mathrm d}t + g_{ii}(x(t),s(t)) {\mathrm d} w_i(t) \right) 
 + \frac{1}{2}\sum_{i=1}^N  \frac{1}{|x(t)|^2}  g_{ii}(x(t),s(t))^2 {\mathrm d}t.
 \end{equation}
 
Now (here we omit for a while the explicit dependence on time of $x_i(t)$, $b_i(x(t),s(t))$ and $s(t)$ for readability) we have that
\begin{align}\label{dis1}
\sum_{i=1}^N \frac{x_i}{|x|^2} f_i(x,s) &= \sum_{i=1}^N \frac{x_i}{|x|^2}  \left[\left(\beta(s) b_i(x,s) (1-x_i) -\delta(s) x_i\right) \right]   \\ \nonumber
& =\frac{x^T}{|x|^2} \left[ \left(\beta(s) A(s) -\delta(s) I \right) x - \beta(s) \diag{(x)} A(s) x \right]  \\ \nonumber
&
\leq \frac{x^T}{|x|^2}  \left(\beta(s) A(s) -\delta(s) I \right) x  
 \leq  \beta(s) \lambda_1(s) -\delta(s) ,
\end{align}
where we have used condition \eqref{cond1}. Moreover, by Theorem \ref{thm:01} we know that $x_i \in (0,1)$ for all times, then it holds that
$x_i(1-x_i) \leq \frac14,$
 hence, by this and from \eqref{cond} and \eqref{cond2}, we have

 \begin{align}\label{dis2}
&\frac{1}{2}\sum_{i=1}^N  \frac{1}{|x|^2} \sigma^2_i(x_i,s) b^2_i(x,s) (1-x_i)^2 
\leq \frac{1}{32}M(s)^2  \lambda_1(s)^2
\end{align}
Thus, substituting inequalities \eqref{dis1} and \eqref{dis2} in \eqref{dVlog}, we have 

\begin{equation*}
{\mathrm d} V(x(t)) \leq \left(  \beta(s(t)) \lambda_1(s(t)) -\delta(s(t)) + \frac{1}{32}M(s(t))^2  \lambda_1(s(t))^2\right) {\mathrm d}t +
 \sum_{i=1}^N \frac{x_i(\tau)}{|x(\tau)|^2} g_{ii}(s(\tau), x_i(\tau)) \, {\rm d}w_i(\tau),
\end{equation*}
Thus, it holds
\begin{equation}\label{Vdis}
V(x(t)) \leq V(x(0)) + \int_0^t \alpha(s(u)) {\mathrm d} u + \tilde M(t),    
\end{equation}
where $\alpha(s)=\beta(s) \lambda_1(s) -\delta(s) + \frac{1}{32}M(s)^2  \lambda_1^2(A(s))$, and
$$\tilde M(t)=\sum_{i=1}^N \int_0^t \frac{x_i(u)}{|x(u)|^2} g_{ii}(x_i(u),s(u)) \, {\rm d}w_i(u).$$ 
\so{Let us note that $\tilde M(t) $ is a martingale.}
The adaptedness of the integrand function follows from the same property of $\{x_i\}$ and $\{s\}$.
It remains to prove that
\begin{align*}
    \bE \int_0^T \sum_{i=1}^N \left| \frac{x_i(u)}{|x(u)|^2} g_{ii}(x_i(u),s(u)) \right|^2 \, {\rm d}u < +\infty.
\end{align*}
From condition \eqref{cond} and since $b_i(s) \leq N-1$, 
we obtain
\begin{align*}\label{dis3}
\sum_{i=1}^N \left| \frac{x_i(u)}{|x(u)|^2} g_{ii}(x_i(u),s(u)) \right|^2 \leq 
|M(s(u))(N-1)|^2
\end{align*}
\so{which proves that $\tilde M(t)$ is a well defined stochastic integral on $[0,T]$ for arbitrary $T > 0$}.
Thus, by the strong law of large numbers for martingales (see \cite{mao2007book})

\begin{equation*}
  \lim_{t \to \infty} \frac{\tilde M(t)}{t}= 0  \qquad \text{a.s.}
\end{equation*}
Finally, from \eqref{Vdis}, by dividing $t$ on the both sides and then letting $t \to \infty$, we obtain, by the ergodic theorem of Markov chain (see e.g., \cite[Sec. 5.5]{resnick1992adventures}) 

\begin{equation*}
\limsup_{t \to \infty} \frac{V(x(t))}{t} \leq \limsup_{t \to \infty} \frac{1}{t} \int_0^t \alpha(s(u)) {\mathrm d} u = \sum_{s=1}^m \pi_s \alpha(s) \quad a.s.    
\end{equation*}
and, if $\sum_{s=1}^m \pi_s \alpha(s) < 0$, the assertion \eqref{ext} holds.
\end{proof}

\section{Stochastic permanence}\label{sec:permanence}

In this section, we discuss another relevant asymptotic behaviour of the solution of \eqref{ito}, the stochastic permanence, showing under which conditions it holds.

\begin{definition}
The system \eqref{ito} is said to be \emph{stochastically permanent} if for any $\varepsilon \in (0,1)$, there exists a positive constant
$\chi = \chi(\varepsilon)$ such that, for any initial condition $x_0 \in (0,1)^N$ and $s_0 \in S,$ the solution satisfies
\begin{equation*}
\label{e:201115.perm}
\liminf_{t \to \infty} \bP(|x(t)| \ge \chi) \ge 1-\varepsilon.
\end{equation*}
\end{definition}

\begin{assumption}\label{ass1}

For some $s \in S$, $q_{rs} >0$, for any $r \neq s$.
\end{assumption}

\begin{lemma}\label{lemma:Mmatrix}
Let Assumption \ref{ass1} holds. Let 
\begin{equation*}\label{baralpha}
\bar \alpha(s)= - \delta(s) + \beta(s) d_{\min}(s) - K(s),
\end{equation*}
where $K(s)= \frac{M(s)^2 \lambda_1(s)^2} {32}$, $s \in S$. 
Then, if $\sum_{s=1}^m \pi_s \bar{\alpha}(s) > 0$, there exists a \so{ sufficiently small positive constant $\theta$} 
such that the matrix 
\begin{equation}\label{Ttheta}
T(\theta)= \diag(\xi_1(\theta), \ldots, 
\xi_m(\theta)) - Q
\end{equation}
is a non singular $M$-matrix, where
$$\xi_s(\theta)=\theta \bar \alpha(s) -\theta^2 K(s), \qquad s \in S.$$
\end{lemma}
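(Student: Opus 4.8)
The plan is to exploit the structural fact that, by construction, $T(\theta)$ is a $Z$-matrix for every $\theta$: its off-diagonal entries are exactly $-q_{sr}\le 0$ for $s\neq r$, and only the diagonal depends on $\theta$. At $\theta=0$ one has $T(0)=-Q$, the singular, irreducible $M$-matrix attached to the generator. Hence the whole problem reduces to showing that a small positive value of $\theta$ pushes $T(\theta)$ into the class of \emph{non-singular} $M$-matrices. I would verify this through the spectral characterization: a $Z$-matrix is a non-singular $M$-matrix if and only if every one of its eigenvalues has strictly positive real part.

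To control the eigenvalues I would pass to $B(\theta):=-T(\theta)=Q-\diag(\xi_1(\theta),\dots,\xi_m(\theta))$. Since $Q$ has non-negative off-diagonal entries and is irreducible (guaranteed by Assumption~\ref{ass1}), $B(\theta)$ is an irreducible Metzler (essentially non-negative) matrix for all $\theta$. By Perron--Frobenius theory its spectral abscissa $\mu(\theta)$, the largest real part among its eigenvalues, is attained at a \emph{simple} real eigenvalue, depends analytically on $\theta$, and carries strictly positive right and left eigenvectors. At $\theta=0$ we have $B(0)=Q$, so $\mu(0)=0$ with right eigenvector $\mathbf 1$ and left eigenvector $\pi$ (recall $Q\mathbf 1=0$ and $\pi^T Q=0$).

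The decisive computation is the first-order expansion of $\mu$. Using the standard perturbation formula for a simple eigenvalue together with the normalisation $\pi^T\mathbf 1=1$,
\begin{equation*}
\mu'(0)=\pi^T B'(0)\,\mathbf 1=-\sum_{s=1}^m \pi_s\,\xi_s'(0)=-\sum_{s=1}^m \pi_s\,\bar\alpha(s),
\end{equation*}
because $\xi_s'(0)=\bar\alpha(s)$ and the $\theta^2$-term contributes nothing at $\theta=0$. The hypothesis $\sum_s\pi_s\bar\alpha(s)>0$ then gives $\mu'(0)<0$; together with $\mu(0)=0$ this yields $\mu(\theta)<0$ for all sufficiently small $\theta>0$. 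Consequently every eigenvalue of $B(\theta)$ has negative real part, so every eigenvalue of $T(\theta)=-B(\theta)$ has positive real part, and since $T(\theta)$ is a $Z$-matrix it is a non-singular $M$-matrix, which is the assertion.

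The main obstacle I anticipate is the rigorous justification of the differentiability of the Perron root and of the perturbation formula, which rests on the simplicity of $\mu(\theta)$, and hence on irreducibility. If one prefers to avoid analytic perturbation theory, the same conclusion can be reached via the principal-minor criterion: all proper leading principal minors of the irreducible singular $M$-matrix $-Q$ are already positive, so by continuity they remain positive for small $\theta$, and it suffices to check the full determinant. By Jacobi's formula $\tfrac{d}{d\theta}\det T(\theta)\big|_{0}=\mathrm{tr}\big(\mathrm{adj}(-Q)\,\diag(\bar\alpha(s))\big)$, and since $\mathrm{adj}(-Q)=\gamma\,\mathbf 1\pi^T$ with $\gamma>0$ for irreducible $Q$, this derivative equals $\gamma\sum_s\pi_s\bar\alpha(s)>0$; hence $\det T(\theta)>0$ for small $\theta>0$, so all leading principal minors are positive and $T(\theta)$ is a non-singular $M$-matrix.
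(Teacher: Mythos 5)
Your proposal is correct, but note that the paper does not actually prove this lemma: it defers entirely to \cite[Lemma 3.4]{li2009population}, where the argument is the principal-minor one --- essentially your fallback. In that reference one orders the states so that the distinguished state of Assumption \ref{ass1} comes last, uses $q_{rs}>0$ to give every proper leading principal submatrix of $-Q$ strictly positive row sums (hence nonsingularity as an $M$-matrix and a positive determinant), and then obtains $\det T(\theta)>0$ for small $\theta>0$ by exactly the first-order expansion you perform via Jacobi's formula and $\mathrm{adj}(-Q)=\gamma\,\mathbf 1\pi^T$. Your primary argument --- passing to the Metzler matrix $B(\theta)=-T(\theta)$, exploiting simplicity of the Perron eigenvalue, and computing $\mu'(0)=-\sum_{s}\pi_s\bar\alpha(s)<0$ by perturbation of a simple eigenvalue --- is a genuinely different route: it replaces the bookkeeping of leading principal minors with a single spectral computation, at the price of invoking analytic perturbation theory and the positive-stability characterization of nonsingular $M$-matrices instead of the more elementary minor criterion. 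Both routes are sound, and both hinge on the same two facts: $\ker Q^T$ is spanned by $\pi$, and the hypothesis $\sum_s\pi_s\bar\alpha(s)>0$ enters only through a first-order derivative at $\theta=0$.

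Two corrections of attribution rather than of substance. First, your parenthetical claim that irreducibility of $Q$ is ``guaranteed by Assumption \ref{ass1}'' is false: that assumption only says some state is reachable in one jump from every other state, and says nothing about leaving it (for $m=2$, $q_{12}>0$, $q_{21}=0$ satisfies it but the chain is reducible). Irreducibility is a standing hypothesis of the paper, imposed when the chain $s(t)$ is introduced, and it is that hypothesis --- not Assumption \ref{ass1} --- which gives you the simple Perron root, the positive stationary vector $\pi$, and (in your fallback) the fact that every proper principal submatrix of the singular irreducible $M$-matrix $-Q$ is a nonsingular $M$-matrix; this last fact is standard but you assert it without proof, so either cite it or prove it in a line (the killed chain exits any proper subset of states, by irreducibility). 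Second, observe that neither of your arguments uses Assumption \ref{ass1} at all, so you are proving the lemma under strictly weaker hypotheses than stated; in the cited reference that assumption is only a device to verify positivity of the proper leading principal minors by row-sum dominance, which your use of irreducibility makes unnecessary.
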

 
The proof follows from the same arguments in \cite[Lemma 3.4]{li2009population} and we omit it.

\begin{lemma}\label{lemma:supE}
If there exists a constant \so{$0<\theta <1$} such that $T(\theta)$ is a non singular $M$-matrix, then for any initial condition $x_0 \in (0,1)^N$, $s_0 \in S$, the solution $x(t)$ of system \eqref{ito} has the property that  
\begin{equation*}\label{supE}
\limsup_{t \to \infty} \bE \left[ \frac{1}{|x(t)|^\theta}\right] \leq H,
\end{equation*}
where $H$ is a positive constant.
\end{lemma}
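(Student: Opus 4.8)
The plan is to build a Lyapunov function on $(0,1)^N\times S$ that blows up exactly where $|x|\to 0$, and to use the $M$-matrix hypothesis to make the leading coefficient of its generator strictly negative. First I would invoke the standard characterization of non-singular $M$-matrices: since $T(\theta)$ in \eqref{Ttheta} is a non-singular $M$-matrix, there is a strictly positive vector $(w_1,\dots,w_m)^T\gg 0$ with $T(\theta)(w_1,\dots,w_m)^T\gg 0$, i.e.
\begin{equation*}
\kappa := \min_{s\in S}\Big(\xi_s(\theta)\, w_s - \sum_{r=1}^m q_{sr} w_r\Big) > 0 .
\end{equation*}
Because the drift lower bound I need naturally involves the $\ell^1$ quantity $\sum_i x_i$ rather than $|x|$ — one has the clean identity $\sum_i b_i(x,s)=\langle A(s){\bf 1},x\rangle=\sum_j d_j(s)x_j\ge d_{\min}(s)\sum_j x_j$, whereas $\langle A(s)x,x\rangle$ admits \emph{no} lower bound of the form $d_{\min}(s)|x|^2$ — I would set $U(x):=\big(\sum_{i=1}^N x_i\big)^{-1}$ and take the test function $V(x,s):=w_s\,(1+U(x))^{\theta}$. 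Controlling $\bE[V(x(t),s(t))]$ will control $\bE[|x(t)|^{-\theta}]$ at the end through $|x|^{-\theta}\le N^{\theta/2}\big(\sum_i x_i\big)^{-\theta}\le N^{\theta/2}(1+U)^{\theta}$, using the norm equivalence recorded earlier.

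Next I would apply the operator \eqref{LV} to $V$. The switching part contributes $(1+U)^{\theta}\sum_r q_{sr}w_r$, while the continuous part equals $w_s$ times $\theta(1+U)^{\theta-1}LU+\tfrac{\theta(\theta-1)}{2}(1+U)^{\theta-2}\sum_i(\partial_{x_i}U)^2 g_{ii}^2$. For the drift inside $LU$ I would use $\sum_i b_i(x,s)\ge d_{\min}(s)\sum_i x_i$ together with $x_i(1-x_i)\le\tfrac14$ and \eqref{cond1} to discard the quadratic remainder (which is of lower order as $\sum_i x_i\to 0$); for the quadratic-variation terms I would use \eqref{cond}, \eqref{cond2} and $x_i^2(1-x_i)^2\le\tfrac1{16}$ to get $\sum_i g_{ii}^2\le\tfrac1{16}M(s)^2\lambda_1(s)^2|x|^2$, hence $\sum_i g_{ii}^2/(\sum_i x_i)^2\le 2K(s)$. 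The key bookkeeping is that, collecting the first- and second-order contributions to the leading power $(1+U)^{\theta}$, the net coefficient multiplying $\sum_i g_{ii}^2/(\sum_i x_i)^2$ equals $\tfrac{\theta(1+\theta)}{2}>0$, so bounding that quantity by $2K(s)$ produces exactly the leading coefficient $-\big(\theta\bar\alpha(s)-\theta^2 K(s)\big)=-\xi_s(\theta)$, with $d_{\min}(s)$ entering through the drift. All remaining contributions are of strictly lower order in $U$ and are bounded on $(0,1)^N$. Hence
\begin{equation*}
LV(x,s) \le -(1+U)^{\theta}\Big(\xi_s(\theta) w_s - \sum_{r} q_{sr} w_r\Big) + C \le -\kappa\,(1+U)^{\theta} + C
\end{equation*}
for some constant $C>0$, and therefore $LV\le \zeta-\eta V$ with $\eta:=\kappa/\max_s w_s>0$ and a suitable $\zeta>0$.

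Finally I would convert this differential inequality into the asymptotic bound. Applying the Itô formula to $e^{\eta t}V(x(t),s(t))$, localizing by a sequence of stopping times to remove the local martingale part, taking expectations and then letting the localization run to infinity via Fatou, one obtains $\bE[V(x(t),s(t))]\le V(x_0,s_0)e^{-\eta t}+\tfrac{\zeta}{\eta}(1-e^{-\eta t})$, so $\limsup_{t\to\infty}\bE[V(x(t),s(t))]\le \zeta/\eta$. Transferring back through $|x|^{-\theta}\le N^{\theta/2}(1+U)^{\theta}\le \tfrac{N^{\theta/2}}{\min_s w_s}\,V$ gives $\limsup_{t\to\infty}\bE[|x(t)|^{-\theta}]\le H$ with $H:=\tfrac{N^{\theta/2}\zeta}{\eta\,\min_s w_s}$. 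I expect the main obstacle to be the second step: matching the leading coefficient to $\xi_s(\theta)$ forces one to track how the first- and second-order Itô contributions combine into the single term $-\theta^2 K(s)$ (which is what fixes the precise form of $\xi_s$), and it requires the switch to the $\ell^1$ quantity so that $d_{\min}(s)$ genuinely appears in the drift; bounding the lower-order-in-$U$ remainder uniformly on $(0,1)^N$ to obtain $C$ is routine but must be carried out.
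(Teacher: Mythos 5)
Your proposal is correct and follows essentially the same route as the paper's proof: the same $M$-matrix characterization producing a positive vector, the same $\ell^1$-based Lyapunov function $U=(\sum_i x_i)^{-1}$ with the $d_{\min}$ drift bound and the $\tfrac{1}{16}M^2\lambda_1^2$ quadratic-variation bound, the same combination $\theta+\tfrac{\theta(\theta-1)}{2}=\tfrac{\theta(1+\theta)}{2}$ yielding the coefficient $-\xi_s(\theta)$, an exponential integrating factor, and the final conversion $|x|^{-\theta}\le N^{\theta/2}(\sum_i x_i)^{-\theta}$. The only cosmetic deviation is that you work with $(1+U)^\theta$ (forcing you to absorb bounded lower-order terms into a constant $C$) whereas the paper uses $U^\theta$ and instead exploits $U\ge 1/N$ to bound the sub-leading term; both devices serve the same purpose.
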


\begin{proof}
Let us define the functions $V,U : (0,1)^N \rightarrow \mathbb{R}_+ $,
 \begin{equation}\label{VU}
 V(x(t))=\sum_{i=1}^N x_i(t),  \qquad U(x(t))=\frac{1}{V(x(t))}
 \end{equation}
 Then, by the generalized It\^o formula, we have
 \begin{equation}\label{dV}
 {\mathrm d}V(x(t))= \sum_{i=1}^N \left[  f_i(x(t),s(t)){\mathrm d}t +
g_{ii}(x(t),s(t)) {\mathrm d} w_i(t) \right],
\end{equation}
 and
\begin{align}\label{dU}
 {\mathrm d}U(x(t)) &= -U(x(t))^2{\mathrm d}V +U(x(t))^3 ({\mathrm d}V(x(t)))^2 \\ \nonumber
 &= -U(x(t))^2\sum_{i=1}^N   \left[ f_i(x(t),s(t)){\mathrm d}t + g_{ii}(x(t),s(t)) {\mathrm d} w_i(t) \right] + U(x(t))^3 \sum_{i=1}^N  g_{ii}(x(t),s(t))^2  {\mathrm d}t\\ \nonumber
 &=\left[-U(x(t))^2 \sum_{i=1}^N    f_i(x(t),s(t)) + U(x(t))^3 \sum_{i=1}^N g_{ii}(x(t),s(t))^2 \right]  {\mathrm d}t 
 - \sum_{i=1}^N U(x(t))^2 g_{ii}(x(t),s(t)) {\mathrm d} w_i(t)
 \end{align}
Now, by \cite[Theorem 2.10]{mao2006stochastic}, since 
$T(\theta)$ is a non singular $M$-matrix, 
there is a vector $z=(z_1, z_2, \ldots, z_m)^T \gg 0$  (meaning that all elements are positive) such that $T(\theta)z \gg 0,$
 that is
 \begin{equation*}
    z_s \theta \left( -\delta(s) + \beta(s) d_{\min}(s) - K(s) (1+\theta)\right)- \sum_{r=1}^m q_{sr} z_r >0, \qquad \text{for all} \quad 1\leq s \leq m.
 \end{equation*}
 with $K(s)=\frac{M(s)^2 \lambda_1(s)^2} {32}.$
  Hereafter, we drop the dependence on time of the processes $x_i(t)$, $b_i(x(t),s(t))$ and $s(t)$, for convenience.
 Define the function $\bar V: (0,1)^N \times S \rightarrow \bR_+$
 $$\bar V(x,s)=z_s U(x)^\theta.$$
  Applying \eqref{LV}, 
we have
\begin{align}\label{lbarV}
L \bar V(x,s) =& z_s \theta U(x)^{\theta-1} \left[ -U(x)^2 \sum_{i=1}^N   f_i(x,s) + U(x)^3 \sum_{i=1}^N g_{ii}(x,s)^2 \right] \\ \nonumber
& + \frac{1}{2} z_s \theta (\theta-1) U(x)^{\theta + 2} \sum_{i=1}^N g_{ii}(x,s)^2 + \sum_{r=1}^m q_{sr}z_r U(x)^\theta \\ \nonumber
=& U(x)^{\theta-1} \left[ z_s \theta\left( -U(x)^2 \sum_{i=1}^N   f_i(x,s) + U(x)^3 \sum_{i=1}^N  g_{ii}(x,s)^2\right) +\sum_{r=1}^m q_{sr}z_r U(x) \right]\\ \nonumber
& + \frac{1}{2} z_s \theta (\theta-1) U(x)^{\theta + 2} \sum_{i=1}^N g_{ii}(x,s)^2 
 \end{align}
 Now, let us consider
\begin{align}
\label{fibound}
-U(x)^2\sum_{i=1}^N  f_i(x,s) 
= U(x)^2 \left(-\sum_{i=1}^N \beta(s) b_i(x,s) + \sum_{i=1}^N \beta(s)  b_i(x,s) x_i + \sum_{i=1}^N \delta(s)  x_i \right).
\end{align}
Then, from \eqref{cond1}
\begin{align}\label{bAxx}
\beta(s) \sum_{i=1}^N  b_i(x,s) x_i & = \beta(s) \sum_{i,j=1}^N a_{ij}(s) x_j  x_i\\ \nonumber
& = \beta(s) \langle A(s) x, x \rangle \le 
\beta(s) \lambda_1(s) \, |x|^2 \le   \beta(s) \lambda_1(s) \, U^{-2}, 
\end{align}
 Moreover, 
\begin{align}\label{bAx}
-\sum_{i=1}^N \beta(s) b_i(x,s) + \sum_{i=1}^N \delta(s) x_i &=- \beta(s) \langle {\bf 1} , A(s) x \rangle + \delta(s) \langle {\bf 1} ,  x \rangle  = - \beta(s) \langle A^T(s) {\bf 1},  x \rangle  + \delta(s) \langle \bf{1},  x \rangle \\ \nonumber
 & \leq -\beta(s) d_{min}(s) \langle {\bf 1},  x \rangle + \delta(s) \langle {\bf 1} ,  x \rangle  =  \left(- \beta(s) \, d_{min}(s) + \delta(s)\right) U(x)^{-1}.
\end{align}
Using these estimates in \eqref{fibound}, we get
\begin{align}
\label{fibound2}
- U(x) ^2\sum_{i=1}^N f_i(x,s) 
\le \beta(s) \lambda_1(s)  + \left(- \beta(s) \, d_{\min}(s) + \delta(s)\right)U(x).
\end{align}
Next, we consider 
\begin{align*}
\sum_{i=1}^N  g_{ii}(x,s)^2 =\sum_{i=1}^N \left[ \sigma_i(x,s) b_i(x,s) (1-x_i) \right]^2;
\end{align*}
By the same arguments for \eqref{dis2}, we obtain that the
previous sum is upper bounded by
\begin{align}\label{gibound}
\frac{M^2}{16}  \, \sum_{i=1}^N   \left[   \sum_{j=1}^N a_{ij}(s) x_j \right]^2 &  = \frac{M^2}{16} \langle A(s)x, A(s)x \rangle 
 \leq \frac{M^2}{16} \lambda_1(s)^2 U(x)^{-2},
\end{align}
Thus, from \eqref{lbarV}, by the bounds in \eqref{fibound2} and \eqref{gibound}, we obtain
\begin{align}\label{lbarV2}
L \bar V(x,s) \leq& U(x)^{\theta-1} \left\{- \left[ z_s \theta\left(  \beta(s) d_{\min}(s) -\delta(s)  -\frac{M^2}{16} \lambda_1(s)^2 \right) - \sum_{r=1}^m q_{sr} z_r \right] U(x)  \right. \\ \nonumber
&+ z_s  \theta  \beta(s) \lambda_1(s)  \Bigg\}+  \frac{M^2}{32} \lambda_1(s)^2 z_s \theta (\theta-1) U(x)^\theta \\ \nonumber
=& U(x)^{\theta-1} \left\{- \left[ z_s \theta\left(  \beta(s) d_{\min}(s) -\delta(s)  -\frac{M^2}{32} \lambda_1(s)^2 (1+\theta) \right) - \sum_{r=1}^m q_{sr} z_r \right] U(x) \right. \\ \nonumber
&+ z_s  \theta  \beta(s) \lambda_1(s)  \Bigg\} = U(x)^{\theta-1}\left( - c_1(s) U(x) + c_0(s) \right)
\end{align}
 At this point, let us choose a constant $\kappa > 0$ sufficiently small such that $T(\theta) z-\kappa z \gg 0$, that is,
\begin{align}\label{condgg}
   z_s \theta \left( -\delta(s) + \beta(s) d_{\min}(s) - K(s)(1+\theta)\right)- \sum_{r=1}^m q_{sr} z_r - \kappa z_s &= c_1(s)-\kappa z_s >0,  \\ 
   & \text{for all} \quad 1\leq s \leq m. \nonumber
\end{align}
  and consider the process
$$Z(x,t,s)=e^{\kappa t} \bar V(x,s).$$
Then, again by the generalized It\^o formula, from \eqref{lbarV2}, we have 
\begin{align}\label{LZ}
   LZ(x,t,s) = \kappa e^{\kappa t}z_s U(x)^\theta +e^{\kappa t} L\bar V(x,s)
    \leq e^{\kappa t} U(x)^{\theta-1}\left[-(c_1(s)- \kappa z_s) U(x) + c_0(s)\right]\\ \nonumber
\end{align}
Noting that $U(x) \geq 1/N$, we can assert that for some positive and finite constant $H(s)$, $1\leq s \leq m$, it holds
$$U(x)^{\theta-1}\left[-(c_1(s)- \kappa z_s) U(x) + c_0(s)\right]\leq H(s) < +\infty, $$
since from \eqref{condgg} $c_1(s)-\kappa z_s>0$. 
 Now, denoting by $\hat z =\displaystyle \min_{s \in S} z_s$, and $ \bar H= \dfrac{\displaystyle \max_{s \in S}  H(s)}{\kappa \hat z}$, from \eqref{LZ}
\begin{equation*}
    LZ(x,t,s) \leq  e^{\kappa t} \kappa \hat z \bar H.
\end{equation*}
Then, from the generalized It\^o formula, we have that (see, e.g., \cite[Sec.2.1]{MR1020057})

$$\bE [Z(x(t),t, s(t))] \leq  \bE [Z(x(0),0,s(0))] + \bE\int_0^t e^{\kappa \tau} \kappa \hat z \bar H {\mathrm d} \tau  $$
  whence 
  $$\bE[\bar V(x(t),s(t))] \leq \frac{\bE [Z(x(0),0,s(0))]}{e^{\kappa t}} + \bar H \hat z, $$
 and since
 
$$\bE[ U(x)^\theta] 
\leq \frac{1}{\hat z} \bE[z_s U(x)^\theta] \leq \frac {\bE [Z(x(0),0,s(0))]}{\hat z \; e^{\kappa t}} + \bar H,  $$
we have
$$\limsup_{t \to \infty} \bE [U (x(t))^\theta] \leq \bar H.$$

 Next, since $U(x)^{-1} = \langle {\bf 1},x \rangle \le \sqrt{N} |x|$, we have
$$\limsup_{t \to \infty} \bE \left[\frac{1}{|x|^\theta}\right] \leq (\sqrt{N})^{\theta}\bar H= H.$$
 \end{proof}

\begin{theorem}\label{thm:perm}
Under Assumption \ref{ass1}, if $\sum_{s=1}^m \pi_s \bar \alpha(s) >0$ holds, then system \eqref{ito} is stochastically permanent. 
\end{theorem}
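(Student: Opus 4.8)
The plan is to derive stochastic permanence as a direct consequence of the two preceding lemmas together with an application of Markov's (Chebyshev's) inequality, which converts the uniform bound on a negative moment of $|x(t)|$ into the desired probability lower bound. The heavy analytic work has already been done: Lemma \ref{lemma:Mmatrix} manufactures the exponent $\theta$ out of the sign condition $\sum_{s=1}^m \pi_s \bar\alpha(s) > 0$, and Lemma \ref{lemma:supE} turns the $M$-matrix property into a uniform control on $\bE[\,|x(t)|^{-\theta}\,]$. So the theorem is essentially a corollary, and the only thing to check is that the two lemmas can be chained.

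Concretely, I would first invoke Lemma \ref{lemma:Mmatrix}: under Assumption \ref{ass1} and the hypothesis $\sum_{s=1}^m \pi_s \bar\alpha(s) > 0$, there is a sufficiently small positive $\theta$ for which $T(\theta)$ is a non-singular $M$-matrix. Since ``sufficiently small'' allows us to take $\theta \in (0,1)$, the hypothesis $0 < \theta < 1$ of Lemma \ref{lemma:supE} is met, and that lemma then yields a finite constant $H > 0$ with
\begin{equation*}
\limsup_{t \to \infty} \bE\!\left[\frac{1}{|x(t)|^{\theta}}\right] \leq H.
\end{equation*}
This uniform negative-moment estimate is the engine of the whole argument.

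The final step is the probabilistic conversion. For any threshold $\chi > 0$, the event $\{|x(t)| < \chi\}$ coincides with $\{\,|x(t)|^{-\theta} > \chi^{-\theta}\}$, so Markov's inequality gives $\bP(|x(t)| < \chi) \leq \chi^{\theta}\, \bE[\,|x(t)|^{-\theta}\,]$, and taking $\limsup_{t\to\infty}$ together with the bound above yields $\limsup_{t\to\infty}\bP(|x(t)| < \chi) \leq \chi^{\theta} H$. Given $\varepsilon \in (0,1)$, I would then choose $\chi = \chi(\varepsilon) = (\varepsilon/H)^{1/\theta}$ so that $\chi^{\theta} H = \varepsilon$, whence
\begin{equation*}
\liminf_{t \to \infty} \bP(|x(t)| \geq \chi) = 1 - \limsup_{t \to \infty} \bP(|x(t)| < \chi) \geq 1 - \varepsilon,
\end{equation*}
which is exactly the definition of stochastic permanence.

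I do not expect a genuine obstacle here, since the delicate estimates live in Lemmas \ref{lemma:Mmatrix} and \ref{lemma:supE}; the remaining work is bookkeeping. The one point requiring a word of care is the compatibility of the exponent ranges: Lemma \ref{lemma:Mmatrix} only guarantees ``some small $\theta > 0$,'' while Lemma \ref{lemma:supE} needs $\theta < 1$, so I would explicitly note that shrinking $\theta$ preserves the $M$-matrix conclusion (the diagonal perturbation $\xi_s(\theta) = \theta\bar\alpha(s) - \theta^2 K(s)$ varies continuously and the relevant strict inequalities are open), allowing us to take $\theta \in (0,1)$ without loss. Everything else is the standard Markov-inequality tail bound.
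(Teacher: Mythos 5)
Your proposal is correct and follows exactly the paper's own (very terse) proof: invoke Lemma \ref{lemma:Mmatrix} to get $\theta \in (0,1)$, apply Lemma \ref{lemma:supE} for the bound on $\bE[|x(t)|^{-\theta}]$, and convert via Markov's inequality with $\chi = (\varepsilon/H)^{1/\theta}$, which is precisely the cited argument from \cite[Theorem 5]{bonaccorsi2016epidemics}. Your extra remark on shrinking $\theta$ to lie in $(0,1)$ is a worthwhile clarification of a point the paper leaves implicit.
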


\begin{proof}
From Lemma \ref{lemma:Mmatrix} there exists a constant $0<\theta<1$ such that $T(\theta)$, defined as in \eqref{Ttheta}, is a non singular $M$-matrix. Then, from Lemma \ref{lemma:supE} and by means of a simple application of Markov's inequality (see, e.g., \cite[Theorem 5]{bonaccorsi2016epidemics}), we have the claim.
\end{proof}

\ste{\begin{remark}
Let us suppose that only the values of $\beta$ and $\delta$ switch depending of the Markov chain, while the network topology, of which $A$ is the adjacency matrix, remains fixed over time. Then, the sufficient conditions for the stochastic permanence of \eqref{ito} become Assumption \ref{ass1}
and $$\sum_{s=1}^m \pi_s( - \delta(s) + \beta(s) \lambda_1(A) - K(s)) >0.$$ Indeed, let $u$ be the Perron eigenvector of $A$, i.e., it is the eigenvector corresponding to the spectral radius $\lambda_1(A)$, and the unique one such that $u>0$, we require \so{$|u|_1=1$}. In the proof of Lemma \ref{lemma:supE}, let us choose $V(x)=\sum_{i=1}^N u_i x_i$. Then, instead of inequalities \eqref{bAxx},\eqref{bAx} and \eqref{gibound}, we have that
\begin{align*}
\beta(s) \sum_{i=1}^N u_i b_i(x,s) x_i & = \beta(s) \sum_{i,j=1}^N a_{ij} x_j u_i x_i \leq  \beta(s) \sum_{i,j=1}^N a_{ij} x_j  x_i\\
& = \beta(s) \langle A x, x \rangle \le 
\beta(s) \lambda_1(s) \, |x|^2 \le \bar u  \beta(s) \lambda_1(A) \, U^{-2}, 
\end{align*}
 since $u_i > 0$ and $|u|_1=1$, where $\bar u=\displaystyle\frac{1}{\displaystyle \min_i u_i^2}$, 
 \begin{align*}
-\sum_{i=1}^N\beta(s) b_i(x,s)  + \sum_{i=1}^N \delta(s) u_i x_i 
&=- \beta \langle u, A x \rangle + \delta(s) \langle u,  x \rangle  = - \beta \langle A^Tu,  x \rangle  + \delta(s) \langle u,  x \rangle \\
 & =  \left(- \beta(s) \, \lambda_1(a) + \delta(s)\right) U^{-1},
\end{align*}
and 
\begin{equation*}\label{gibound22}
\frac{M^2}{16}  \, \sum_{i=1}^N  u_i^2 \left[   \sum_{j=1}^N a_{ij}(s) x_j \right]^2   \le \frac{M^2}{16} \left[ \sum_{i=1}^N \left( u_i \sum_{j=1}^N  a_{ij}(s) x_j\right) \right]^2
= \frac{M^2}{16}
 \langle u, A x \rangle^2 = \frac{M^2}{16} \lambda_1(s)^2 U^{-2}.
\end{equation*}
Finally, with the same arguments in Lemmas \ref{lemma:Mmatrix}, \ref{lemma:supE}, and Theorem \ref{thm:perm}, we arrive to our assertion. The same considerations would apply if the network switches, but in such a way that $\lambda_1(s)$ remains the same in each state $s \in S$ (however quite unrealistic).
\end{remark}}

\section{Asymptotic bound of integral average and invariant probability measure}\label{sec:invar}

\rev{In this section, we estimate the time-average limit at infinity 
of the sample path of the solution. 
The lower bound found for this quantity is related to the stationary distribution of the Markov chain, the model parameters and the topology of the network. Basically, this result shows that system \eqref{ito}, pathwise, is \emph{persistent in the time average}.}

\begin{lemma}\label{lemma:zerotheta}
If there exists a constant $0<\theta<1$ such that $A(\theta)$ is a non singular $M$-matrix, then the solution $x(t)$ of \eqref{ito} with any initial values $x_0 \in (0,1)^N$ and $s_0 \in S$ has the following properties
\begin{equation}\label{limsupx}
\limsup_{t \to \infty} \frac{\log(|x(t)|)}{\log t} \leq 0, \qquad \text{a.s.}, 
\end{equation}
\begin{equation}\label{liminfx}
\liminf_{t \to \infty} \frac{\log(|x(t)|)}{\log t} \geq - \frac{1}{\theta}, \qquad \text{a.s.}
\end{equation}

\end{lemma}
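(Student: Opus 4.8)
The plan is to treat the two estimates separately; the first is essentially free, while the second carries all the work. For \eqref{limsupx} I would argue directly from invariance: by Theorem \ref{thm:01} the solution stays in $(0,1)^N$, so $|x(t)| < \sqrt N$ for every $t \ge 0$, whence $\log|x(t)| < \tfrac12\log N$. Dividing by $\log t$ and letting $t \to \infty$ gives $\limsup_{t\to\infty}\frac{\log|x(t)|}{\log t} \le 0$ at once; no probabilistic input is needed here.

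For \eqref{liminfx} the engine is Lemma \ref{lemma:supE}, which under the standing hypothesis (the $M$-matrix condition on $T(\theta)$) provides a constant $H$ with $\bE[|x(t)|^{-\theta}] \le H$ uniformly in large $t$; writing $V(x)=\sum_i x_i$ and $U=1/V$ as in \eqref{VU} and using $|x| \le V \le \sqrt N |x|$, this gives $\sup_{t\ge 0}\bE[U(x(t))^\theta] =: \tilde H < \infty$. I would run a Borel--Cantelli argument over the unit intervals $[n,n+1]$. The crux is a uniform bound $\Phi_n := \bE[\sup_{n\le t\le n+1}U(x(t))^\theta] \le C$ with $C$ independent of $n$. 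Granting this, Chebyshev gives $\bP(\sup_{n\le t\le n+1}U^\theta > n^{1+\eps}) \le C\,n^{-(1+\eps)}$ for any fixed $\eps>0$, and since $\sum_n n^{-(1+\eps)}<\infty$, Borel--Cantelli ensures that almost surely $\sup_{n\le t\le n+1}U(x(t))^\theta \le n^{1+\eps}$ for all large $n$. Converting back through $|x|^{-\theta}\le N^{\theta/2}U^\theta$, for $t\in[n,n+1]$ and $n$ large one gets $|x(t)| \ge N^{-1/2}t^{-(1+\eps)/\theta}$, hence $\frac{\log|x(t)|}{\log t}\ge -\frac{1+\eps}{\theta}-\frac{\log N}{2\log t}$; letting $t\to\infty$ and then $\eps\downarrow 0$ along a countable sequence yields \eqref{liminfx}.

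It remains to establish the uniform bound on $\Phi_n$, which is the main obstacle. By \eqref{dU}, $U^\theta$ satisfies $\mathrm dU^\theta = \mathcal A(x,s)\,\mathrm dt - \theta U^{\theta+1}\sum_i g_{ii}(x,s)\,\mathrm dw_i$, where the drift $\mathcal A$ is the expression already estimated inside the proof of Lemma \ref{lemma:supE}; reusing \eqref{fibound2} and \eqref{gibound} (now without the $z_s$ weights and the chain-generator term) and $U\ge 1/N$ gives a bound of the form $\mathcal A(x,s) \le C_0(1+U^\theta)$. Integrating over $[n,t]$, taking the supremum and then expectations, the drift contributes at most $C_0(1+\tilde H)$, while for the martingale part I would apply the Burkholder--Davis--Gundy inequality. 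Its quadratic variation $\int_n^{n+1}\theta^2 U^{2\theta+2}\sum_i g_{ii}^2\,\mathrm du$ is, by \eqref{gibound}, at most $\tfrac{\theta^2 M^2\lambda_1^2}{16}\int_n^{n+1}U^{2\theta}\,\mathrm du \le C_1\,\bigl(\sup_{n\le t\le n+1}U^\theta\bigr)\int_n^{n+1}U^\theta\,\mathrm du$, using the factorization $U^{2\theta}\le(\sup_{[n,n+1]}U^\theta)\,U^\theta$. Hence, by BDG and Cauchy--Schwarz, the martingale contribution to $\Phi_n$ is at most $C_2\,\bigl(\bE[\sup_{n\le t\le n+1}U^\theta]\bigr)^{1/2}\bigl(\bE\int_n^{n+1}U^\theta\,\mathrm du\bigr)^{1/2} \le C_2\tilde H^{1/2}\Phi_n^{1/2}$. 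Collecting terms yields the self-referential inequality $\Phi_n \le C_3 + C_2\tilde H^{1/2}\Phi_n^{1/2}$, which forces $\Phi_n$ below a constant independent of $n$.

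The delicate point is precisely this last estimate. The naive BDG bound would produce the moment $\bE[U^{2\theta}]$, which is \emph{not} controlled by Lemma \ref{lemma:supE}; the device that saves the argument is the factorization $U^{2\theta}\le(\sup_{[n,n+1]}U^\theta)\,U^\theta$, closing the resulting quadratic inequality in $\Phi_n^{1/2}$. To make the manipulation legitimate one must first know $\Phi_n<\infty$ before dividing; I would secure this by a standard localization, proving the inequality for the process stopped at $\rho_k=\inf\{t:U(x(t))\ge k\}$ (where all quantities are finite), obtaining a bound uniform in $k$, and passing $k\to\infty$ by Fatou's lemma.
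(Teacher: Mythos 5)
Your proposal is correct, and its overall architecture is the same as the paper's: \eqref{limsupx} is immediate from invariance of $(0,1)^N$; for \eqref{liminfx} both arguments derive, from the moment bound inherited from Lemma \ref{lemma:supE}, a bound on $\Phi:=\bE\bigl[\sup_{\text{interval}}(\cdot)^\theta\bigr]$ that is uniform over a partition of time into finite intervals (via It\^o's formula and Burkholder--Davis--Gundy), and then conclude with Markov's inequality, Borel--Cantelli, and the conversion $\sqrt N\,|x|\ge\sum_i x_i$. The genuine difference lies in how the self-referential inequality for $\Phi$ is closed. The paper works with $(1+U)^\theta$ on intervals $[(k-1)\eta,k\eta]$ and bounds \emph{both} the drift and the BDG term by $c(\eta)\,\Phi$, with $c(\eta)=\theta\bigl(K_2\eta+K_3\max_s\{M(s)\lambda_1(s)/4\}\,\eta^{1/2}\bigr)$, then chooses $\eta$ small enough that $c(\eta)<\tfrac12$ and absorbs linearly. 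You instead keep intervals of length one, control the drift by the time-integrated moment $\tilde H$ rather than by the sup, and close the martingale term quadratically via the factorization $U^{2\theta}\le\bigl(\sup U^\theta\bigr)U^\theta$ followed by Cauchy--Schwarz, obtaining $\Phi\le C_3+C_2\tilde H^{1/2}\Phi^{1/2}$, which self-bounds with no tuning of the interval length. Both devices are legitimate. A point in your favor is that you explicitly secure $\Phi<\infty$ before manipulating the inequality (localization at $\rho_k$ plus Fatou); the paper's absorption step has exactly the same need --- one cannot subtract $c(\eta)\Phi$ from both sides without knowing $\Phi<\infty$ --- but passes over it silently. In the localized version, do note that the time integrals surviving Cauchy--Schwarz are still bounded by the \emph{unstopped} moment bound $\tilde H$ (the stopped and unstopped processes agree before $\rho_k$, and the indicator can be dropped), so your constants are indeed independent of the localization level $k$.
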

\begin{proof}
Since $x(t)$ remains in $(0,1)^N$ for all positive times, it is straightforward to see that \eqref{limsupx} holds. 

Let us prove \eqref{liminfx}. Let $U : (0,1)^N \rightarrow \mathbb{R}_+ $ be defined as in \eqref{VU}, for simplicity we write $U(x(t))=U(t)$. By the generalized It\^o formula and \eqref{dU}, we have that
\begin{align}\label{diffe1U}
{\mathrm{d}}[ (1+U(t))^{\theta} ] &= \theta(1+U(t))^{\theta-1} {\mathrm{d}} U(t) \\ \nonumber
&+ \frac{1}{2} \theta(\theta-1)(1+U(t))^{\theta-2} U(t)^4 \sum_{i=1}^N g_{ii}(x(t),s(t))^2 {\mathrm{d}} t \\  \nonumber
&= \theta(1+U(t))^{\theta-1}\Bigg\{ \left(-U(x(t))^2 \sum_{i=1}^N    f_i(x(t),s(t)) + U(x(t))^3 \sum_{i=1}^N g_{ii}(x(t),s(t))^2 \right)  {\mathrm d}t \\ \nonumber
& - \sum_{i=1}^N U(x(t))^2 g_{ii}(x(t),s(t)) {\mathrm d} w_i(t) \Bigg\}
 + \frac{1}{2} \theta(\theta-1)(1+U(t))^{\theta-2} U(t)^4 \sum_{i=1}^N g_{ii}(x(t),s(t))^2 {\mathrm{d}} t \\ \nonumber
&= L(1+U(t))^\theta {\mathrm{d}}t - \theta (1+U(t))^{\theta-1} \sum_{i=1}^N g_{ii}(x(t),s(t)) {\mathrm{d}}w_i(t).
\end{align}
Now, by inequalities \eqref{fibound2} and \eqref{gibound}
\begin{align}\label{L1u}
L(1+U(t))^\theta  \leq& \theta(1+U)^{\theta-2}\Bigg\{- (1+U)\left(\beta(s(t)) d_{\min}(s(t)) -\delta(s(t)) -\beta(s(t)) \lambda_1(s) -\frac{M(s(t))^2}{16}\lambda_1(s(t))^2\right)U \nonumber
\\
&+ \frac{M(s(t))^2}{32}(\theta-1)  \lambda_1(s(t))^2 U^2\Bigg\} \\ 
\nonumber
 =& \theta(1+U(t))^{\theta-2} \Bigg\{- \left(\beta(s(t)) d_{\min}(s(t)) -\delta(s(t)) -\frac{M(s(t))^2}{32} \lambda_1(s(t))^2(1+\theta) \right)U(t)^2  \\ \nonumber
& \left. -\left(\beta(s(t)) d_{\min}(s(t)) - \beta(s(t)) \lambda_1(s(t)) -\delta(s(t)) -\frac{M(s(t))^2}{16}\lambda_1(s(t))^2   \right)U(t) \right. \Bigg\}.
\end{align}
Thus, by \eqref{L1u}, from \eqref{diffe1U}
\begin{align}\label{diffe1U2}
{\mathrm{d}}[ (1+U(t))^{\theta} ] 
 \leq& \theta(1+U(t))^{\theta-2} \Bigg\{- \left(\beta(s(t)) d_{\min}(s(t)) -\delta(s(t)) -\frac{M(s(t))^2}{32} \lambda_1(s(t))^2(1+\theta) \right)U(t)^2  \\ \nonumber
&  + K_1(1+U(t))  \Bigg\} - \theta (1+U(t))^{\theta-1} U(t)^2\sum_{i=1}^N g_{ii}(x(t),s(t)) {\mathrm{d}}w_i(t)
\end{align}
where $K_1=\max_{s \in S} \{c_1(s),c_0(s)\} $, 
where we define
$$c_1(s(t))= -\left(\beta(s(t)) d_{\min}(s(t)) - \beta(s(t)) \lambda_1(s(t)) -\delta(s(t)) -\frac{M(s(t))^2}{16}\lambda_1(s(t))^2   \right),$$ and $$c_0(s(t))=\beta(s(t))\lambda_1(s(t)),$$ 
With similar arguments as in Lemma \ref{lemma:supE}, one can prove that there exists $H>0$, such that
\begin{equation}\label{Esup2}
\bE[(1+U(t))^\theta] \leq H, \qquad \text{on} \quad t \geq 0, 
\end{equation}
Now, let $\eta >0$ and $k=1, 2,\ldots$, from \eqref{diffe1U2}, we obtain that
\begin{align}\label{EE}
 \bE &\left[  \sup_{(k-1)\eta \leq t \leq k\eta } (1+U(t))^\theta \right] \leq \bE\left[ (1+U((k-1)\eta))^\theta\right]  \\ \nonumber
  &+ \bE \left(\sup_{(k-1)\eta \leq t \leq k\eta } \left| \int_{(k-1)\eta}^t \theta(1+U(\tau))^{\theta-2}\Bigg\{ - \Bigg( \beta(s(\tau)) d_{\min}(s(\tau)) -\delta(s(\tau))  -\frac{M(s(\tau))^2}{32} \lambda_1(s(\tau))^2(1+\theta) \Bigg)U(\tau)^2\right.\right. \\\nonumber
 &+ K_1(1+U(\tau)) \Bigg\}  {\mathrm{d}} \tau \Bigg|  \Bigg) + \bE \left(\sup_{(k-1)\eta \leq t \leq k\eta }\left| \int_{(k-1)\eta}^t\theta (1+U(\tau))^{\theta-1} U(\tau)^2\sum_{i=1}^N g_{ii}(x(\tau),s(\tau)) {\mathrm{d}}w_i(\tau)\right| \right).
\end{align}
Next we can compute
\begin{align}\label{Efirst}
 \bE &\left(\sup_{(k-1)\eta \leq t \leq k\eta } \left| \int_{(k-1)\eta}^t \theta(1+U(\tau))^{\theta-2}\Bigg\{ - \Bigg( \beta(s(\tau)) d_{\min}(s(\tau)) -\delta(s(\tau)) -\frac{M(s(\tau))^2}{32} \lambda_1(s(\tau))^2(1+\theta) \Bigg)U(\tau)^2\right.\right. \\\nonumber
 &+ K_1(1+U(\tau)) \Bigg\}{\mathrm{d}} \tau \Bigg|  \Bigg) \\ \nonumber
 & \leq \bE  \left(  \int_{(k-1)\eta}^{k\eta} \left| \theta(1+U(\tau))^{\theta-2}\Bigg\{ - \Bigg( \beta(s(\tau)) d_{\min}(s(\tau)) -\delta(s(\tau))  -\frac{M(s(\tau))^2}{32} \lambda_1(s(\tau))^2(1+\theta) \Bigg)U(\tau)^2 \right. \right. \\ \nonumber
 & \left. \left. + K_1(1+U(\tau)) \Bigg\} \right| {\mathrm{d}} \tau   \right) \\ \nonumber 
 & \leq  \bE  \left(  \int_{(k-1)\eta}^{k\eta} \theta(1+U(\tau))^{\theta-2}\Bigg\{  \Bigg( \beta(s(\tau)) d_{\min}(s(\tau)) +\delta(s(\tau))  +\frac{M(s(\tau))^2}{32} \lambda_1(s(\tau))^2(1+\theta) + K_1 \Bigg)(1+U(\tau))^2  \Bigg\}  {\mathrm{d}} \tau   \right)\\ \nonumber
 & \leq \theta \eta K_2  \bE \left(\sup_{(k-1)\eta \, \leq t \leq k\eta } (1+U(t))^\theta  \right),
 \end{align}     
where $$K_2=\max_{s \in S}  \Bigg\{ \beta(s) d_{\min}(s) +\delta(s)  +\frac{M(s)^2}{32} \lambda_1(s)^2(1+\theta) + K_1 \Bigg\}.$$

By the Burkholder-Davis-Gundy inequality \cite[Lemma A.32]{yin2009hybrid}, there exists a positive constant $K_3$, such that
\begin{align}\label{Esecond}
\bE & \left(\sup_{(k-1)\eta \leq t \leq k\eta }\left|\int_{(k-1)\eta}^t\theta (1+U(\tau))^{\theta-1} U(\tau)^2\sum_{i=1}^N g_{ii}(x(\tau),s(\tau)) {\mathrm{d}}w_i(\tau)\right| \right) \\ \nonumber
& \so{\leq K_3 \bE \left(  \int_{(k-1)\eta}^{k\eta} (\theta (1+U(\tau))^{\theta-1} U(\tau)^2)^2 \sum_{i=1}^N g_{ii}(x(\tau),s(\tau))^2 {\mathrm{d}}\tau  \right)^{\frac{1}{2}}} \\ \nonumber
& \leq  K_3 \theta \bE \left(  \int_{(k-1)\eta}^{k\eta}(1+U(\tau))^{2 \theta} \frac{M^2(s(\tau))\lambda_1(s(\tau))^2}{16} {\mathrm{d}}\tau  \right)^{\frac{1}{2}} \\ \nonumber
&\leq K_3 \theta \max_{s \in S} \left\{\frac{M(s)}{4} \lambda_1(s)\right\} \bE \left(  \int_{(k-1)\eta}^{k\eta} (1+U(\tau))^{2\theta} {\mathrm{d}}\tau \right)^{\frac{1}{2}} \\ \nonumber
& \leq  K_3 \theta \max_{s \in S} \left\{\frac{M(s)}{4} \lambda_1(s)\right\} \eta^{\frac{1}{2}} \bE\left(\sup_{(k-1)\eta \, \leq t \leq k\eta } (1+U(t))^\theta \right),
\end{align}
where we have used the inequality \eqref{gibound}.
After substituting \eqref{Efirst} and \eqref{Esecond} into \eqref{EE}, we obtain
\begin{align*}
\bE \left[  \sup_{(k-1)\eta \leq t \leq k\eta } (1+U(t))^\theta \right] & \leq \bE\left[ (1+U((k-1)\eta))^\theta\right]  \\ \nonumber
& +\theta \left( K_2 \eta + K_3 \max_{s \in S} \left\{\frac{M(s)}{4} \lambda_1(s)\right\} \eta^{\frac{1}{2}} \right) 
\bE\left(\sup_{(k-1)\eta \, \leq t \leq k\eta } (1+U(t))^\theta \right).
\end{align*}
Now, letting $\eta>0$ sufficiently small to have
\begin{equation*}
\theta \left( K_2 \eta + K_3 \max_{s \in S} \left\{\frac{M(s)}{4} \lambda_1(s)\right\} \eta^{\frac{1}{2}} \right) < \frac{1}{2},
\end{equation*}
and by considering \eqref{Esup2}, it holds that
\begin{equation*}
    \bE\left(\sup_{(k-1)\eta \, \leq t \leq k\eta } (1+U(t))^\theta \right) \leq 2 H.
\end{equation*}
Let $\varepsilon >0$, by applying the Markov's inequality, we have
\begin{equation*}
\bP \left\{ \omega: \sup_{(k-1)\eta \, \leq t \leq k\eta} (1+U(t))^\theta > (k\eta)^{1+\varepsilon}  \right\} \leq \frac{2H}{(k \eta)^{1+ \varepsilon}}  \qquad k=1,2 \ldots
\end{equation*}
Applying the Borel-Cantelli lemma (see e.g.  \cite{mao2007book}), for almost all $\omega \in \Omega$
\begin{equation}\label{supU}
    \sup_{(k-1)\eta \, \leq t \leq k\eta} (1+U(t))^\theta \leq (k\eta)^{1+\varepsilon}
\end{equation}
holds for all but finitely many $k$.
Thus, there exists an integer $k_0(\omega) > 1/\eta+2$ for almost all $\omega \in \Omega$, for which \eqref{supU} holds for every $k \geq k_0$. 
Then, for almost all $\omega \in \Omega$, if $k \geq k_0$ and $(k-1)\eta \leq t \leq k\eta$,
\begin{equation*}
\frac{\log(1+U(t))^\theta}{\log t} \leq \frac{(1+\varepsilon)\log(k\eta)}{\log((k-1)\eta)} \to 1+ \varepsilon   \quad \text{as} \quad k  \quad \text{increases}.
\end{equation*}
Letting $\varepsilon \to 0$ and taking the upper limit, we have
\begin{equation*}
    \limsup_{t \to \infty} \frac{\log(1+U(t))^\theta}{\log t} \leq 1 \quad \text{a.s.}
\end{equation*}
Thus, remembering the definition of $U(t)$, we have
\begin{equation*}
\liminf_{t \to \infty}\frac{\log(\sum_{i=1}^N x_i(t))}{\log t} \geq - \frac{1}{\theta} \quad \text{a.s.}   
\end{equation*}
From which, since $\sqrt N |x| \geq \sum_{i=1}^N x_i $ we finally arrive to
\begin{equation*}
\liminf_{t \to \infty}\frac{\log |x(t)|}{\log t} \geq - \frac{1}{\theta} \quad \text{a.s.}   
\end{equation*} \end{proof}

\begin{theorem}\label{thm:comp}
 Under Assumption \ref{ass1}, and condition $\sum_{s=1}^m \pi_s \bar \alpha(s) >0$, for any initial values $x_0 \in (0,1)^N$ and $s_0 \in S$, the solution $x(t)$ of \eqref{ito} obeys
 \begin{equation*}
    \liminf_{t \to \infty} \frac{1}{t} \int_0^t |x(u)| {\mathrm{d}} u \geq \frac{1}{\hat K} \sum_{s=1}^m \pi_s \bar \alpha(s) \quad \text{a.s.}
 \end{equation*}
 where $\hat K= \displaystyle \max_{s \in S} \beta(s)\lambda_1(s)$.
\end{theorem}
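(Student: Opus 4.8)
The plan is to apply the It\^o formula to $\log V(x(t))$, where $V(x)=\sum_{i=1}^N x_i$ is the function already introduced in \eqref{VU}, and to extract from its drift a pointwise lower bound of the form $\bar\alpha(s)-\hat K|x|$. Integrating this bound, dividing by $t$ and letting $t\to\infty$ will then turn the almost sure sublinear growth $\log V(x(t))/t\to 0$ into the claimed lower bound for the time average of $|x(u)|$.

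First I would use \eqref{dV} and It\^o's formula to write
\begin{equation*}
{\mathrm d}\log V(x(t)) = \left(\frac{1}{V}\sum_{i=1}^N f_i(x,s)-\frac{1}{2V^2}\sum_{i=1}^N g_{ii}(x,s)^2\right){\mathrm d}t + \frac{1}{V}\sum_{i=1}^N g_{ii}(x,s)\,{\mathrm d}w_i(t).
\end{equation*}
The core of the proof is to bound the drift from below. Since $\sum_i f_i=\beta(s)\langle {\bf 1},A(s)x\rangle-\beta(s)\langle A(s)x,x\rangle-\delta(s)\langle {\bf 1},x\rangle$, the symmetry of $A(s)$ gives $\langle {\bf 1},A(s)x\rangle=\sum_i d_i(s)x_i\ge d_{\min}(s)V$, while \eqref{cond1} gives $\langle A(s)x,x\rangle\le\lambda_1(s)|x|^2$; hence $\frac1V\sum_i f_i\ge(\beta(s)d_{\min}(s)-\delta(s))-\beta(s)\lambda_1(s)\frac{|x|^2}{V}$. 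The decisive step is to keep $|x|^2$ here (rather than bounding it by $V^2$ as in Lemma \ref{lemma:supE}) and then to use $V=\sum_i x_i\ge|x|$ to get $\frac{|x|^2}{V}\le|x|$, so that this term is at least $-\hat K|x|$. For the noise, the estimate behind \eqref{gibound} gives $\sum_i g_{ii}^2\le\frac{M(s)^2}{16}\lambda_1(s)^2|x|^2$, whence $\frac{1}{2V^2}\sum_i g_{ii}^2\le\frac{M(s)^2}{32}\lambda_1(s)^2\frac{|x|^2}{V^2}\le K(s)$, using $|x|\le V$. Combining, the drift is at least $\bar\alpha(s(t))-\hat K|x(t)|$.

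Integrating then yields
\begin{equation*}
\log V(x(t))\ge\log V(x(0))+\int_0^t\bar\alpha(s(u))\,{\mathrm d}u-\hat K\int_0^t|x(u)|\,{\mathrm d}u+M(t),
\end{equation*}
where the martingale $M(t)=\int_0^t\frac1V\sum_i g_{ii}(x(u),s(u))\,{\mathrm d}w_i(u)$ has quadratic variation bounded by $\int_0^t\frac{M(s(u))^2\lambda_1(s(u))^2}{16}\,{\mathrm d}u\le Ct$, so $M(t)/t\to0$ a.s.\ by the strong law of large numbers for martingales. Under Assumption \ref{ass1} and $\sum_{s=1}^m\pi_s\bar\alpha(s)>0$, Lemma \ref{lemma:Mmatrix} provides a $\theta\in(0,1)$ making $T(\theta)$ a nonsingular $M$-matrix, so Lemma \ref{lemma:zerotheta} applies and gives $\log|x(t)|/t\to0$, hence $\log V(x(t))/t\to0$ a.s.\ (as $|x|\le V\le\sqrt N|x|$). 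I would then divide the displayed inequality by $t$, isolate $\hat K\,\frac1t\int_0^t|x(u)|\,{\mathrm d}u$, and take $\liminf_{t\to\infty}$: every remaining term on the right converges — the time average of $\bar\alpha(s(u))$ to $\sum_{s=1}^m\pi_s\bar\alpha(s)$ by the ergodic theorem for the chain, and the others to $0$ — so that $\hat K\liminf_t\frac1t\int_0^t|x(u)|\,{\mathrm d}u\ge\sum_{s=1}^m\pi_s\bar\alpha(s)$, and dividing by $\hat K>0$ gives the assertion.

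The hard part will be the drift estimate: it has to be done while retaining $|x|^2$, so that after dividing by $V$ the quadratic correction appears precisely as $-\hat K|x|$ with $\hat K=\max_s\beta(s)\lambda_1(s)$; the coarser bound $|x|^2\le V^2$ of Lemma \ref{lemma:supE} would produce a term $-\beta(s)\lambda_1(s)V$ and spoil the constant. Everything else — the martingale law of large numbers, the ergodic average, and the sublinear growth of $\log V(x(t))$ from Lemma \ref{lemma:zerotheta} — is standard and already in place from the earlier results.
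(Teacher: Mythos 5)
Your proposal is correct and follows essentially the same route as the paper's proof: Itô's formula applied to $\log V$ with $V(x)=\sum_i x_i$, the drift bounded below by $\bar\alpha(s)-\hat K|x|$ via exactly the estimates you describe (keeping $|x|$ rather than $V$ in the quadratic term, and bounding the noise correction by $K(s)$), the martingale handled by the strong law of large numbers, the sublinear growth of $\log V(x(t))$ taken from Lemma \ref{lemma:zerotheta} (via Lemma \ref{lemma:Mmatrix}), and the ergodic theorem for the chain to identify the limit of the time average of $\bar\alpha(s(u))$. No gaps; your explicit remark that Assumption \ref{ass1} and the positivity condition are what make Lemma \ref{lemma:zerotheta} applicable is a detail the paper leaves implicit.
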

\begin{proof}
Let us consider $V : (0,1)^N \rightarrow \mathbb{R}_+ $ as in \eqref{VU} and its stochastic differential \eqref{dV}.
From the assertions in Lemma \ref{lemma:zerotheta} it is immediate to observe that 
\begin{equation}\label{limV0}
 \lim_{t \to \infty}\frac{\log(V(x(t)))}{t}= 0, \qquad \text{a.s.} 
\end{equation}  
By using \eqref{dV} we compute
\begin{equation}\label{dlogV}
{\mathrm{d}}\log V(x(t))= 
 \frac{1}{V(x(t))} \sum_{i=1}^N \Big[  f_i(x(t),s(t)){\mathrm d}t +
g_{ii}(x(t),s(t)) {\mathrm d} w_i(t) \Big] - \frac{1}{2V(x(t))^2} \sum_{i=1}^N g_{ii}(x(t),s(t))^2 {\mathrm{d}} t
\end{equation}
Now, let us drop for a while $x(t)$ from $V(x(t))$ and omit the explicit dependence on time of $x_i(t)$, $b_i(t)$ and $s(t)$ for readability. Then, by using inequalities \eqref{bAxx} and \eqref{bAx}, we have 
\begin{align*}
 \frac{1}{V} \sum_{i=1}^N  f_i(x,s)&= \frac{1}{V} \Big[\sum_{i=1}^N  \beta(s) \sum_{j=1}^N a_{ij}(s)x_j - \sum_{i=1}^N \beta(s) x_i\sum_{j=1}^N a_{ij}(s) x_j- \sum_{i=1}^N \delta(s)  x_i \Big] \\ \nonumber
  &\geq \frac{1}{V} \Big( \beta(s) d_{\min}(s) \sum_{i=1}^N x_i - \beta(s) \lambda_1(s) |x| \sum_{i=1}^N   x_i - \delta(s) \sum_{i=1}^N   x_i \Big) \\ \nonumber
  & \geq \beta(s) d_{\min}(s) -\hat K |x| - \delta(s),
 \end{align*}
where $\hat K= \displaystyle \max_{s \in S} \beta(s)\lambda_1(s)$.
Moreover, by inequality \eqref{gibound}, we have
\begin{equation*}
    - \frac{1}{2V^2} \sum_{i=1}^N g_{ii}(x,s)^2 \geq -\frac{M(s)^2}{32}\lambda^2_1(A(s)) 
\end{equation*}
Substituting the above estimates in \eqref{dlogV} gives
\begin{align*}
  {\mathrm{d}}\log V(x(t)) &\geq \Big(\beta(s(t)) d_{\min}(s(t))  - \delta(s(t)) -\frac{M^2(s(t))}{32}\lambda^2_1(A(s(t)))\Big){\mathrm{d}}t -\hat K |x(t)| {\mathrm{d}}t \\ \nonumber
  & + \frac{1}{V(x(t))} \sum_{i=1}^N g_{ii}(x(t),s(t)) {\mathrm d} w_i(t)
\end{align*}
Whence,
\begin{equation}\label{logV}
\log V(x(t))+ \hat K \int_0^t |x(u)| {\mathrm{d}}u \geq \log V(x(0)) + \int_0^t \bar \alpha(s(u)) {\mathrm d}u + \int_0^t \sum_{i=1}^N\frac{g_{ii}(x(u),s(u))}{V(x(u))}  {\mathrm d} w_i(u)
\end{equation}
By the strong law of large numbers for martingales (see \cite{mao2007book}), we have
\begin{equation*}
\lim_{t \to \infty} \frac{1}{t} \int_{0}^t  \sum_{i=1}^N \frac{g_{ii}(x(u),s(u))}{V(x(u))}  {\mathrm d} w_i(u)=0 \quad \text{a.s.}  
\end{equation*}
Therefore, dividing both sides of \eqref{logV} by $t$ and then letting $t\ \to \infty$, {by \eqref{limV0} and the Ergodic theorem for Markov chain} \cite[Sec. 5.5]{resnick1992adventures}, we have 
\begin{equation*}
\liminf_{t \to \infty} \frac{1}{t}\int_0^t |x(u)| {\mathrm{d}}u \geq \frac{1}{\hat K} \sum_{s=1}^m \pi_s \bar \alpha(s) \quad \text{a.s.} 
\end{equation*}
\end{proof}

\subsection{Existence of an invariant probability measure}

\rev{The presence of a stochastic perturbation destroys the existence positive deterministic equilibria. 
However, in the analysis of a stochastic dynamical system, one can try to prove the existence (and possibly the uniqueness) of an invariant probability measure (stationary distribution) on a positive invariant domain to better understand the long-term behavior of the system. }

\rev{In this section and in the following one, we investigate the existence of a stationary distribution and its ergodic property for the 
two-component process} $\left\{ (x(t),s(t)), t \geq 0\right\}$.\\
Consider the positive invariant set $$ \cH=(0,1)^N \times S$$ for the system \eqref{ito} (see Theorem \ref{thm:01}).
Let us note that $(x(t),s(t))$ is a \so{time-homogeneous Markov process}, and \so{since the hypothesis of \cite[Lemma 2.14]{yin2009hybrid} are satisfied in $[0,1]^N$, then it follows that our process has the Feller property (see \cite[Remark 2.15]{yin2009hybrid}).}\\\\


We use the following theorem to ensure the existence of an invariant probability measure on $\cH$.

\begin{theorem}[see \cite{stettner1986existence}]\label{existmu}
Let $\Phi=\left\{ \Phi_t, t \geq 0 \right\}$ be a Feller process with state space $(X,\cB(X))$.  Then either

\begin{itemize}
\item[a)] there exists an invariant probability measure on $X$, or
\item[b)] for any compact set $C \subset X$,
$$\lim_{t \to \infty}\sup_{\mu} \frac{1}{t} \int_0^t \left( \int_X \bP(u,x,C) \mu( {\mathrm d}x)\right){\mathrm d} u=0,  $$
where the supremum is taken over all initial distributions $\mu$ on the state space $X$, and $\bP(t,x,C)=\bP_x(\Phi_t \in C)$ is the transition probability function.
\end{itemize}
\end{theorem}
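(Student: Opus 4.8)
The plan is to prove this dichotomy by the Krylov--Bogolyubov averaging method in continuous time, exploiting the Feller property to push time-averaged occupation measures to an invariant limit. For an initial distribution $\mu$ on $X$ and a time $t>0$ I would introduce the Cesàro (occupation) measure
$$
R_t^\mu(A)=\frac{1}{t}\int_0^t\left(\int_X \bP(u,x,A)\,\mu(\mathrm dx)\right)\mathrm du,\qquad A\in\cB(X),
$$
which is again a probability measure on $X$. In this notation alternative b) reads $\limsup_{t\to\infty}\sup_\mu R_t^\mu(C)=0$ for every compact $C\subset X$, so the content of the theorem is precisely that the failure of b) forces a).

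First I would assume that b) fails and read off the structure it provides. Its negation yields a compact set $C_0\subset X$, a constant $\delta>0$, a sequence $t_n\to\infty$, and initial distributions $\mu_n$ with $R_{t_n}^{\mu_n}(C_0)\ge\delta$ for all $n$. Thus the probability measures $\nu_n:=R_{t_n}^{\mu_n}$ retain at least mass $\delta$ on a fixed compact set and so cannot leak all of their mass to infinity. Working on a metrizable compactification $\bar X$ of $X$ (the one-point compactification when $X$ is locally compact), weak-$*$ compactness of the probability measures on the compact space $\bar X$ lets me pass to a subsequence along which $\nu_n\to\nu$ weakly on $\bar X$, where the restriction of $\nu$ to $X$ is a nonzero sub-probability measure satisfying $\nu(C_0)\ge\delta>0$.

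The heart of the argument is to show that this $\nu$, viewed on $X$, is invariant for the transition semigroup $(P_s)_{s\ge0}$, where $(P_sf)(x)=\int_X f(y)\,\bP(s,x,\mathrm dy)$. For fixed $s>0$ and $f\in C_b(X)$, a direct computation telescopes the averages to boundary terms,
$$
R_{t}^{\mu}(P_sf)-R_{t}^{\mu}(f)=\frac{1}{t}\left(\int_{t}^{t+s}\mu(P_uf)\,\mathrm d u-\int_{0}^{s}\mu(P_uf)\,\mathrm d u\right),
$$
whose right-hand side is bounded by $2s\|f\|_\infty/t$ and hence vanishes as $n\to\infty$ along $t=t_n$. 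The Feller property guarantees $P_sf\in C_b(X)$, so that both $R_{t_n}^{\mu_n}(f)\to\nu(f)$ and $R_{t_n}^{\mu_n}(P_sf)\to\nu(P_sf)$; passing to the limit gives $\nu(P_sf)=\nu(f)$ for every $f\in C_b(X)$ and every $s>0$, i.e. $\nu P_s=\nu$. Normalizing, $\nu/\nu(X)$ is an invariant probability measure and alternative a) holds.

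The main obstacle is the possible escape of mass to infinity along the compactification: the weak limit may a priori charge $\bar X\setminus X$, so the delicate point is to certify that the $X$-part of $\nu$ is simultaneously nonzero and genuinely invariant rather than merely sub-invariant. The uniform lower bound $\nu(C_0)\ge\delta$ secures nonzeroness, while the Feller continuity of $P_s$ together with the telescoping estimate above is exactly what upgrades the limit from an approximate to an exact invariance identity. Closing this gap rigorously—controlling the test functions $f$ and $P_sf$ near the added points at infinity so that weak convergence on $\bar X$ transfers faithfully to the integrals against $\nu$ on $X$—is the only genuinely technical step.
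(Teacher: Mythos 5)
The paper never proves this statement: it is quoted, with attribution, from Stettner \cite{stettner1986existence}, so there is no internal proof to compare against and your attempt must stand on its own. Your skeleton is the right one (and is essentially the Krylov--Bogolyubov route that underlies Stettner's result): negate b) to extract a compact $C_0$, $\delta>0$, times $t_n\to\infty$ and laws $\mu_n$ with $R_{t_n}^{\mu_n}(C_0)\ge\delta$; pass to a weak-$*$ limit $\nu$ of the occupation measures on a compactification $\bar X$; deduce $\nu(C_0)\ge\delta$ by portmanteau for the closed set $C_0$; prove invariance of $\nu$ restricted to $X$ via the telescoping identity; normalize. The telescoping estimate $\lvert R_t^\mu(P_sf)-R_t^\mu(f)\rvert\le 2s\|f\|_\infty/t$ and the mass lower bound are both correct.

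The gap is in the invariance step, and it is genuine, not merely technical — as written, that step is false. Weak convergence of $\nu_n$ on $\bar X$ gives $\nu_n(g)\to\nu(g)$ only for $g\in C(\bar X)$; a general $f\in C_b(X)$ need not extend continuously to $\bar X$, and, worse, even when $f$ does (e.g.\ $f$ compactly supported), the Feller property only guarantees $P_sf\in C_b(X)$ with no control at infinity, so neither of your claimed convergences $\nu_n(f)\to\nu(f)$, $\nu_n(P_sf)\to\nu(P_sf)$ is justified; the second can fail when mass escapes to $\bar X\setminus X$. The standard repair requires an extra idea you do not supply. Test only with $f\in C_c(X)$, $f\ge 0$ (these do extend continuously to $\bar X$ by $0$, so $\nu_n(f)\to\nu|_X(f)$). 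Extend $P_sf\ge0$ to $\bar X$ by the value $0$ at infinity: this extension is lower semicontinuous, so the portmanteau inequality gives $\nu|_X(P_sf)\le\liminf_n\nu_n(P_sf)=\lim_n\nu_n(f)=\nu|_X(f)$, hence only the \emph{sub}-invariance $\nu|_X P_s\le\nu|_X$ as measures on $X$. The upgrade to equality then comes from conservativeness: since $\bP(s,x,X)=1$, the finite measures $\nu|_X P_s$ and $\nu|_X$ have the same total mass, and a setwise inequality between finite measures of equal mass forces equality, i.e.\ true invariance; normalizing $\nu|_X$ (nonzero because $\nu|_X(C_0)\ge\delta$) yields alternative a). You also implicitly need $X$ locally compact separable metric for the compactification to be metrizable and for $C_c(X)$ to determine measures — harmless in the paper's application where $X=(\Delta\setminus\{{\bf 0}\})\times S$, but it should be stated. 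Without the sub-invariance-plus-conservation argument, the proof does not close.
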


\begin{theorem}\label{exiinv}
Under Assumption \ref{ass1}, and condition $\sum_{s=1}^m \pi_s \bar \alpha(s) >0$, the Markov process $z(t)=(x(t),s(t))$ has an invariant probability measure $\mu^*$ on the state space $\cH$.
\end{theorem}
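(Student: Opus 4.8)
The plan is to apply the dichotomy of Theorem \ref{existmu} to the Feller process $z(t)=(x(t),s(t))$ on $\cH=(0,1)^N\times S$ and to rule out alternative b), which then forces alternative a), namely the existence of an invariant probability measure. Concretely, it suffices to exhibit a \emph{single} compact set $C\subset\cH$ whose time-averaged occupation $\frac1t\int_0^t\bP(z(u)\in C)\,{\mathrm d}u$ does not tend to $0$: since the supremum over initial laws appearing in b) dominates the value at the particular choice $\mu=\delta_{(x_0,s_0)}$, producing such a $C$ contradicts b).

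First I would convert the pathwise bound of Theorem \ref{thm:comp} into a bound in expectation. Because $|x(u)|\le\sqrt N$, Fatou's lemma yields $\liminf_{t\to\infty}\frac1t\int_0^t\bE|x(u)|\,{\mathrm d}u\ge c$, where $c=\frac1{\hat K}\sum_{s=1}^m\pi_s\bar\alpha(s)>0$. Writing $\bE|x(u)|\le\sqrt N\,\bP(|x(u)|\ge\kappa)+\kappa$ and taking $\kappa=c/2$ then gives $\liminf_{t\to\infty}\frac1t\int_0^t\bP(|x(u)|\ge\kappa)\,{\mathrm d}u\ge\frac{c}{2\sqrt N}>0$. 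Thus the region $\{|x|\ge\kappa\}\times S$ carries, on average, a positive fraction of time; in particular the process does not spend asymptotically all of its time near the disease-free corner $x=\mathbf 0$.

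The remaining, and main, difficulty is to upgrade $\{|x|\ge\kappa\}$ to a \emph{genuinely} compact subset of the open cube, equivalently to show that the occupation measures $\Pi_t(\cdot)=\frac1t\int_0^t\bP(z(u)\in\cdot)\,{\mathrm d}u$ are tight on $(0,1)^N\times S$. This needs uniform-in-time control of the mass near the whole boundary $\partial(0,1)^N$, not merely of $|x|$ away from $0$. For the lower faces I would invoke Lemma \ref{lemma:supE}: the bound $\sup_{t\ge t_0}\bE[\,|x(t)|^{-\theta}\,]\le H$ controls $\bP(|x(t)|\le\rho)$, while the drift computation behind Remark \ref{rem:boundary} shows that the faces $\{x_i=0\}\setminus\{\mathbf 0\}$ are left instantaneously, since an infected neighbour makes the $i$-th drift strictly positive. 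For the upper faces one uses that on $\{x_i=1\}$ one has $f_i(x,s)=-\delta(s)<0$ and $g_{ii}(x,s)=0$, so the drift points strictly inward; a Lyapunov function penalising $x_i\to1$ (for instance $\sum_i(1-x_i)^{-p}$ with $p$ small) should give $\limsup_{t\to\infty}\bE\big[\sum_i(1-x_i(t))^{-p}\big]<\infty$ and hence control $\bP(\max_i x_i\ge1-\rho)$.

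Granting tightness, Prohorov's theorem extracts a subsequence $\Pi_{t_n}\Rightarrow\mu^*$ on $\cH$, and the standard Krylov--Bogolyubov argument (using the Feller property already established) shows $\mu^*$ is invariant; the expectation lower bound forces $\int|x|\,{\mathrm d}\mu^*\ge c>0$ along a suitable subsequence, so $\mu^*\neq\delta_{\mathbf 0}\otimes\pi$ and $\mu^*$ genuinely charges $(0,1)^N$. Equivalently, tightness furnishes a compact $C\subset\cH$ with $\limsup_{t\to\infty}\Pi_t(C)>0$, directly contradicting alternative b) and leaving a). I expect the boundary/tightness step --- in particular securing the uniform upper-face moment bound and the per-coordinate lower control that together give compactness in the \emph{open} cube --- to be the technical heart of the argument; the Stettner dichotomy and the passage from Theorem \ref{thm:comp} to an expectation bound are comparatively routine.
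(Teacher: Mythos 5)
Your overall skeleton matches the paper's: invoke Stettner's dichotomy (Theorem \ref{existmu}) for the Feller process, and use Theorem \ref{thm:comp} to produce a set whose time-averaged occupation stays bounded away from zero (your Fatou-then-decompose computation is equivalent to the paper's pathwise indicator decomposition followed by Fatou, both yielding $\liminf_{t\to\infty}\frac1t\int_0^t\bP\left(|x(u)|\ge\zeta/2\right){\mathrm d}u\ge\zeta/(2\sqrt N)$). The problem is the step you yourself flag as the ``technical heart'': upgrading $\{|x|\ge\zeta/2\}$ to a compact subset of the \emph{open} cube. That step is a genuine gap, and the tools you cite do not close it. Lemma \ref{lemma:supE} bounds $\bE[|x(t)|^{-\theta}]$, which controls the norm near the origin but gives no per-coordinate control: a point with $x_1=1/2$ and $x_2\to0$ leaves every compact subset of $(0,1)^N$ while $|x|$ stays bounded away from $0$. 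The observation that the faces $\{x_i=0\}\setminus\{{\bf 0}\}$ are left instantaneously (Remark \ref{rem:boundary}) is qualitative and yields no quantitative bound on the occupation of a neighbourhood of those faces. And the upper-face moment bound $\limsup_t\bE\big[\sum_i(1-x_i(t))^{-p}\big]<\infty$ is only asserted (``should give''); it is nowhere proved in the paper under the present hypotheses and would require a separate Lyapunov argument. As written, alternative b) of the dichotomy is therefore not ruled out on $\cH$.

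The paper dissolves this difficulty rather than solving it: it applies the dichotomy on the \emph{enlarged} state space $\tilde\cH=\big(\Delta\setminus\{{\bf 0}\}\big)\times S$, with $\Delta=[0,1]^N$, on which the Feller property still holds. There the set $C=D\times S$ with $D=\{x\in[0,1]^N:\ |x|\ge\zeta/2\}$ is already compact (closed, bounded, and bounded away from the single deleted point ${\bf 0}$), so your occupation bound immediately contradicts alternative b) and yields an invariant probability measure $\mu^*$ on $\tilde\cH$ --- no tightness near the boundary is needed at all. The passage back to $\cH$ is then soft: by Theorem \ref{thm:01} and Remark \ref{rem:boundary}, from any starting point in $\tilde\cH$ one has $x(t)\in(0,1)^N$ a.s.\ for all $t>0$, so invariance forces $\mu^*\big(\partial\Delta\times S\big)=0$ and $\mu^*$ restricts to an invariant probability measure on $\cH$. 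I recommend you either adopt this state-space enlargement, or accept that your route requires proving the per-coordinate lower bounds and the upper-face moment bound from scratch --- substantially more work than the theorem demands.
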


\begin{proof}
Let us consider the process $(x(t),s(t))$ on a larger state space 
$$\tilde \cH = \Delta \setminus \left\{\bf 0 \right\} \times S,$$
where, we recall, $\Delta=[0,1]^N$ and $\bf 0$ is the zero-vector in $\mathbb R^N$. By means of Theorem \ref{existmu}, one can prove the existence of an invariant probability measure $\mu^*$ for the process $z(t)=(x(t),s(t))$ on $\tilde \cH$, providing the existence of a compact set $C \subset \tilde \cH$ that verifies

\begin{equation}\label{liminfP}
\liminf_{t \to \infty}  \frac{1}{t} \int_0^t \left( \int_{\tilde \cH} \bP(u,z,C) \mu( {\mathrm d}z)\right){\mathrm d} u = \liminf_{t \to \infty}  \frac{1}{t} \int_0^t \bP(u,z_0,C) {\mathrm d}u > 0, 
\end{equation}
for some initial distribution $\mu=\delta_{z_0}$, with $z_0 \in \tilde \cH$, where $\delta$ is the Dirac function and $\bP(u,\cdot,\cdot)$ is the transition probability function. Once the existence of $\mu^*$ is proved, we can easily see that $\mu^*$ is also the invariant probability measure of $(x(t),s(t))$ on $\cH$. \so{Indeed, from Theorem \ref{thm:01} and Remark \ref{rem:boundary}) there exists a time $\bar t >0$, such that for all $t \geq \bar t$, $(x(t),s(t)) \in \cH$, therefore $\mu^*\left( \partial \Delta \times S \right)=0$.}

Thus, it is sufficient to find a compact set $C \subset \tilde \cH$ that satisfies \eqref{liminfP} to prove the existence of an invariant probability measure for the process $(x(t),s(t))$ on $\cH$. \\ By Theorem \ref{thm:comp}, we have
\begin{equation*}
\liminf_{t \to \infty} \frac{1}{t} \int_0^t |x(\tau)| {\mathrm{d}} \tau \geq \frac{1}{\hat K} \sum_{s=1}^m \pi_s \bar \alpha(s) := \zeta>0 \quad \text{a.s.}
 \end{equation*}
 where $\hat K= \displaystyle\max_{s \in S} \beta(s)\lambda_1(s)$. 
 Since
 \begin{equation*}
     \frac{1}{t}\int_0^t |x(u)| {\mathrm d}u = \frac{1}{t} \int_0^t |x(u)| \mathbbm{1}_{\left\{|x(u)| < \frac{\zeta}{2}\right\}}{\mathrm d}u + \frac{1}{t} \int_0^t |x(u)| \mathbbm{1}_{\left\{|x(u)| \geq \frac{\zeta}{2}\right\}}{\mathrm d}u 
     \leq \frac{\zeta}{2} + \sqrt N  \frac{1}{t}  \int_0^t  \mathbbm{1}_{\left\{|x(u)| \geq \frac{\zeta}{2}\right\}}{\mathrm d}u,
 \end{equation*}
we get
\begin{equation}\label{dislim}
    \liminf_{t \to \infty} \frac{1}{t} \int_0^t \mathbbm{1}_{\left\{|x(u)| \geq \frac{\zeta}{2}\right\}}{\mathrm d}u \geq \frac{\zeta}{2 \sqrt N} \qquad \text{a.s.}
\end{equation}
By Fatou's lemma, and from \eqref{dislim}, it follows that
\begin{multline*}
\liminf_{t \to \infty} \frac{1}{t} \int_0^t \bP\Big(|x(u)| \geq \frac{\zeta}{2} \Big) {\mathrm d}u =  
\liminf_{t \to \infty} \frac{1}{t} \int_0^t \bE\left[ \mathbbm{1}_{\left\{|x(u)| \geq \frac{\zeta}{2}\right\}}\right] {\mathrm d}u \\ \nonumber
 \geq \bE \left[ \liminf_{t \to \infty} \frac{1}{t} \int_0^t \mathbbm{1}_{\left\{|x(u)| \geq \frac{\zeta}{2}\right\}} {\mathrm d}u  \right] 
\geq \frac{\zeta}{2\sqrt {N}}.
\end{multline*}
Now, let us consider the compact set $C=D \times S \subset \tilde \cH$, where 
$D=\left\{ x \in \mathbb{R}^N: 0 \leq x_i \leq 1, |x| \geq \frac{\zeta}{2} \right\}$. Then,
\begin{align*}
 \liminf_{t \to \infty}  \frac{1}{t} \int_0^t \bP(u,z_0,C) {\mathrm d}u =  \liminf_{t \to \infty} \frac{1}{t} \int_0^t \bP\Big(|x(u)| \geq \frac{\zeta}{2} \Big) {\mathrm d}u \geq \frac{\zeta}{2 \sqrt N}
\end{align*}
which implies that \eqref{liminfP} holds.
\end{proof}

\subsection{Positive recurrence and ergodicity}

In the previous section, we proved the existence of an invariant probability measure under conditions ensuring the stochastic permanence. In this section, under a different condition, we are able to prove the ergodicity of $z(t)=(x(t),s(t))$ in $(0,1)^N$. According to \cite{zhu2007asymptotic}, it suffices to prove that $z(t)$ is positive recurrent (see \cite[Definition 2.3]{zhu2007asymptotic}). {From \cite{zhu2007asymptotic}, we can deduce the following lemma that gives a condition for the positive recurrence of $z(t)=(x(t),s(t))$ with respect to some domain $D \times \{s\}$, with $\bar D \subset (0,1)^N$, and $s \in S$ (and consequently, with respect to any nonempty open subset of $(0,1)^N \times S$). From this the ergodicity of the process in $ (0,1)^N$  follows.}

\begin{lemma}\label{lemma:lyap}
The Markov process $z(t)=(x(t),s(t))$ is positive recurrent if the following conditions are satisfied:

\begin{itemize}
    \item [i)] \so{for each $s \in S$}, and for each open set $D$ such that $\bar D\subset (0,1)^N$, there exists $k_D>0$ s.t.
    \begin{equation*}
        k_D |\xi|^2\leq \xi^T g(x,s) \xi \leq k_D^{-1}|\xi|^2 \qquad \text{for all} \quad \xi \in \mathbb{R}^N, 
    \end{equation*}
    with some constant $k_D \in (0,1]$ for all $x \in D$,
     \item [ii)] there exists an open subset $D$ with a sufficiently smooth boundary and $\bar D \subset (0,1)^N$, such that for each $s \in S$, there exists a  twice continuously differentiable nonnegative function $V(\cdot,s): D^c \cap (0,1)^N \to \mathbb{R}$ such that for some $\alpha > 0$,
     
     $$LV(x,s) \leq -\alpha, \qquad (x,s) \in D^c \cap (0,1)^N \times S.$$
     
\end{itemize}

Moreover, the positive recurrent Markov process $z(t)=(x(t),s(t))$ has a unique stationary distribution $\nu(\cdot,\cdot)$ on $\cH$, and it holds that

\begin{equation}\label{erg}
    \mathbb P_{x,i} \left(\lim_{T \to \infty} \frac{1}{T} \int_0^T f(x(t),\gamma(t)) dt =\sum_{i=1}^m \int_{(0,1)^N} f(x,i) \nu (dx,i)\right)=1,
\end{equation}
where $f:(0,1)^N\times S \to \mathbb{R}$ is any
integrable function with respect to the measure $\nu(\cdot,\cdot)$.
\end{lemma}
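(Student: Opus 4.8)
The plan is to recognize conditions (i) and (ii) as exactly the hypotheses under which the regime-switching diffusion theory of Zhu and Yin \cite{zhu2007asymptotic} yields positive recurrence and ergodicity, and then to trace the two conclusions (finite mean return time, and the ergodic identity \eqref{erg}) back to that theory. First I would record that $z(t)=(x(t),s(t))$ is a regular (non-explosive) time-homogeneous Markov process on $\cH=(0,1)^N\times S$: this is precisely the content of Theorem \ref{thm:01} together with Remark \ref{rem:boundary}, which guarantee that the process never leaves the open set $(0,1)^N$ and therefore has no finite explosion time. I would also recall the notion of positive recurrence with respect to a domain $D\times\{s\}$ from \cite[Definition 2.3]{zhu2007asymptotic}, namely finiteness of the mean first return time to that set.

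The heart of the argument is the Foster--Khasminskii drift criterion supplied by condition (ii). Fix $s\in S$ and the open set $D$ with smooth boundary and $\bar D\subset(0,1)^N$, and let $\tau_D=\inf\{t\ge 0:\ x(t)\in D\}$ be the first hitting time of $D$. Starting from $(x,s)$ with $x\in D^c\cap(0,1)^N$, I would apply the generalized It\^o (Dynkin) formula to $V(x(t),s(t))$ up to a suitably localized stopping time, using $LV(x,s)\le-\alpha$, to obtain
\begin{equation*}
0\le \bE_{x,s}\big[V(x(t\wedge\tau_D),s(t\wedge\tau_D))\big]\le V(x,s)-\alpha\,\bE_{x,s}[t\wedge\tau_D].
\end{equation*}
Rearranging and letting $t\to\infty$ by monotone convergence gives $\bE_{x,s}[\tau_D]\le V(x,s)/\alpha<\infty$, which is exactly finiteness of the mean return time and hence positive recurrence with respect to $D\times\{s\}$. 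Condition (i), the uniform nondegeneracy of $g(\cdot,s)$ on every $\bar D\subset(0,1)^N$, together with the standing irreducibility of the chain $s(\cdot)$, then upgrades positive recurrence with respect to this single domain to positive recurrence with respect to every nonempty open subset of $(0,1)^N\times S$, as in \cite{zhu2007asymptotic}.

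Finally, I would invoke the ergodic part of the Zhu--Yin theory: a positive recurrent, nondegenerate regime-switching diffusion admits a unique stationary distribution $\nu(\cdot,\cdot)$ on $\cH$, and the strong law of large numbers \eqref{erg} holds for every $\nu$-integrable $f$. The main obstacle I anticipate is not the drift estimate itself but the fact that the natural state space here is the bounded open set $(0,1)^N$ rather than all of $\bR^N$: the diffusion coefficients $g_{ii}$ degenerate on $\partial(0,1)^N$ (both through the factors $1-x_i$ and through $b_i(x,s)=\sum_j a_{ij}(s)x_j$ vanishing at $\mathbf 0$), so ``compact set'' in the classical criterion must be read as a set compactly contained in $(0,1)^N$. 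Consequently $D^c\cap(0,1)^N$ is not compact, and the Dynkin computation above requires a careful localization that simultaneously controls the inner boundary $\partial D$ and the approach to the outer boundary $\partial(0,1)^N$; the latter is handled by the invariance established in Theorem \ref{thm:01}, which ensures the process stays in the interior almost surely and that the outer boundary contributes nothing to the return-time analysis.
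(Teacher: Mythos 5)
Your proposal is correct and takes essentially the same approach as the paper: the paper offers no proof at all, simply deducing the lemma from the Zhu--Yin theory \cite{zhu2007asymptotic} and noting in a remark that their uniform ellipticity assumption can be replaced by the local nondegeneracy condition (i) and their assumption (A)(ii) by irreducibility of $Q$. Your write-up follows the same reduction, and in addition makes explicit the Foster--Khasminskii/Dynkin drift estimate $\bE_{x,s}[\tau_D]\le V(x,s)/\alpha$ and the localization needed near $\partial(0,1)^N$ (handled by the invariance of Theorem \ref{thm:01}), details the paper leaves entirely inside the citation.
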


\begin{remark}
Let us note that in \cite{zhu2007asymptotic} the authors assume that the differential operator $L$ is uniformly elliptic (assumption (A), condition (i) in \cite{zhu2007asymptotic}). In our case, the uniform ellipticity is not satisfied in $(0,1)^N$, however we can replace it by the weaker condition (i) in Lemma \ref{lemma:lyap}.
It is straightforward to see that their results still hold.
{Let us also note that it is sufficient to replace the condition ii) of the assumption (A) in \cite{zhu2007asymptotic}, with our request of irreducibility of the matrix $Q$}. 
\end{remark}

\begin{theorem}
Let $\tilde \alpha(s)=\beta(s) d_{min}(s)- N \delta -N\frac{M(s)^2 \lambda_1(A)^2}{2}$. If $\sum_{s=1}^m\pi_s \tilde \alpha(s) >0$, the process $(x(t),s(t))$ has a unique stationary distribution $\nu$ on $\cH$, with the ergodic property \eqref{erg}.
\end{theorem}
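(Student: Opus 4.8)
The plan is to deduce the result from Lemma~\ref{lemma:lyap}: it suffices to check its two hypotheses for $z(t)=(x(t),s(t))$, after which the lemma furnishes both the unique stationary distribution $\nu$ on $\cH$ and the ergodic identity \eqref{erg}, which is exactly the claim. As in Lemma~\ref{lemma:Mmatrix}, the condition $\sum_{s}\pi_s\tilde\alpha(s)>0$ is used to produce regime weights: arguing as in \cite[Lemma~3.4]{li2009population}, the irreducibility of $Q$ together with the positivity of this $\pi$-average give a vector $z=(z_1,\dots,z_m)^T\gg0$ and a constant $\lambda>0$ with $z_s\tilde\alpha(s)-\sum_{r=1}^m q_{sr}z_r\ge\lambda$ for every $s\in S$. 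This inequality is the algebraic mechanism that will convert the \emph{average} sign condition into a \emph{pointwise} drift bound.

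Condition (i) is immediate. If $\bar D\subset(0,1)^N$ is compact then each coordinate is confined to some $[\epsilon,1-\epsilon]$; since the graph has no isolated vertex, $b_i(x,s)=\sum_j a_{ij}(s)x_j\ge\epsilon\,d_{\min}(s)>0$, and $\sigma_i(\cdot,s)$ is continuous and nonvanishing on $(0,1)$, so the diagonal entries $g_{ii}(x,s)$ are bounded above and below by positive constants on $\bar D\times S$. Because $g$ is diagonal, $\xi^T g(x,s)\xi=\sum_i g_{ii}(x,s)\xi_i^2$, and the two-sided bound $k_D|\xi|^2\le\xi^T g(x,s)\xi\le k_D^{-1}|\xi|^2$ follows for a suitable $k_D\in(0,1]$.

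The substance is condition (ii). I would choose $D=\{x\in(0,1)^N:\epsilon<x_i<1-\epsilon,\ i=1,\dots,N\}$, so that $D^c\cap(0,1)^N$ is a thin neighbourhood of $\partial(0,1)^N$, and take a Lyapunov function that blows up there, of the form $\bar V(x,s)=z_s V(x)$ with $V(x)=-\sum_{i=1}^N\log x_i-\sum_{i=1}^N\log(1-x_i)\ge0$. Computing $LV$ with \eqref{LV} and using the bounds already in the paper, namely \eqref{fibound2} and \eqref{gibound} together with $\sigma_i(x_i,s)\le M(s)x_i$, the first-order terms reproduce $\beta(s)d_{\min}(s)$ (from $-\sum_i\log x_i$, via $b_{i_0}(x,s)/x_{i_0}\ge d_{\min}(s)$ at the coordinate $i_0$ of smallest $x_i$) and $-N\delta(s)$, while the second-order terms contribute at most $N M(s)^2\lambda_1(s)^2/2$; one thus obtains $LV(x,s)\le-\tilde\alpha(s)$ plus a boundary contribution that tends to $-\infty$ as some $x_i\to1$ (through the $-\delta(s)x_i/(1-x_i)$ coming from $-\log(1-x_i)$) or as some single $x_i\to0$ with an infected neighbour (through $-\beta(s)b_i(x,s)(1-x_i)/x_i$ from $-\log x_i$). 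Weighting by $z_s$ and adding the switching term $\sum_r q_{sr}z_r V$ generated by \eqref{LV}, the inequality $z_s\tilde\alpha(s)-\sum_r q_{sr}z_r\ge\lambda$ yields $L\bar V(x,s)\le-\alpha$ on $D^c\cap(0,1)^N\times S$ for a suitable $\alpha>0$ once $\epsilon$ is small, which is (ii).

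The delicate point, and the step I expect to be the main obstacle, is the behaviour at the extinction corner where \emph{all} $x_i\to0$ simultaneously. There $V(x)\to+\infty$ while the drift $LV$ stays bounded, so the switching term $\sum_r q_{sr}z_r V$ grows at the same rate as the quantity one is trying to make negative, and the naive weighting does not obviously close; this is precisely where the $\pi$-average condition, rather than a pointwise one, is forced, and it must be absorbed by matching the growth order of $\bar V$ and of $L\bar V$, in the spirit of the power $U(x)^\theta$ used in Lemma~\ref{lemma:supE} and of the $M$-matrix $T(\theta)$. Once this matching is carried out and (i)--(ii) are verified, Lemma~\ref{lemma:lyap} gives the positive recurrence of $(x(t),s(t))$, hence the existence of a unique stationary distribution $\nu$ on $\cH$ together with the ergodic property \eqref{erg}.
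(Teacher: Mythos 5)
Your overall frame matches the paper's: invoke Lemma~\ref{lemma:lyap} with $D_\eps=(\eps,1-\eps)^N$, verify condition (i) from the diagonal structure of $g$, and build a Lyapunov function from $-\sum_i\log x_i-\sum_i\log(1-x_i)$, handling the face $x_i\to 1$ through $-\delta x_i/(1-x_i)\to-\infty$ and extracting $\beta(s)d_{\min}(s)$ from the coordinate of smallest $x_i$ via $b_{i_0}(x,s)/x_{i_0}\ge d_{\min}(s)$. The genuine gap is precisely the one you flag yourself and then leave open: how the \emph{average} condition $\sum_s\pi_s\tilde\alpha(s)>0$ becomes a \emph{pointwise} bound $L\bar V\le-\alpha$. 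Your mechanism --- multiplicative regime weights $\bar V(x,s)=z_s V(x)$ with $z\gg 0$ from an $M$-matrix argument --- provably cannot work. Indeed, by \eqref{LV}, $L\bar V(x,s)$ equals $z_s$ times the diffusion part plus the switching term $\bigl(\sum_r q_{sr}z_r\bigr)V(x)$. Unless $\tilde\alpha(s)>0$ for \emph{every} $s$ (the pointwise case, where $z$ may be taken constant), any admissible $z$ is nonconstant, hence $Qz\neq 0$ by irreducibility of $Q$; since $\pi^T Qz=0$ with $\pi\gg 0$, some entry $(Qz)_s$ is strictly positive. Along $x=\eps\mathbf{1}$, $\eps\to 0$, the diffusion part of $LV$ stays bounded (there $b_i(x,s)/x_i=d_i(s)$ and $\sigma_i\le M(s)x_i$) while $V(x)\to+\infty$, so $L\bar V(x,s)\to+\infty$ for that regime $s$: condition (ii) fails for your $\bar V$. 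Conceding the obstacle (``the naive weighting does not obviously close'') and appealing to an uncarried-out ``matching of growth orders'' does not repair this; note also that the function $z_sU(x)^\theta$ of Lemma~\ref{lemma:supE}, which you point to, does not have pointwise negative drift on all of $D_\eps^c\cap(0,1)^N$ (for instance near faces where only some coordinates vanish), so it cannot be substituted directly either.

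The paper closes this gap with a different, and here essential, device: the regime dependence is inserted \emph{additively}, not multiplicatively. By irreducibility of $Q$ one solves the Poisson system \eqref{eq:Qw}, $\tilde\alpha-Q\omega=(\pi^T\tilde\alpha)\mathbf{1}$, and sets $V(x,s)=-k_1\sum_i\log x_i-\sum_i\log(1-x_i)+k_1(\omega_s+\bar\omega)$. Because the $x$-part is regime-independent and $\sum_r q_{sr}=0$, the switching term in \eqref{LV} is the \emph{bounded constant} $k_1\sum_r q_{sr}\omega_r=k_1\bigl(\tilde\alpha(s)-\pi^T\tilde\alpha\bigr)$, rather than a multiple of the exploding $V(x)$. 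The drift estimate then produces the bracket $(1-\eps)\beta(s)d_{\min}(s)-N\delta-\tfrac{N}{2}M(s)^2\lambda_1(A)^2-\sum_r q_{sr}\omega_r$, which by the Poisson equation equals $\pi^T\tilde\alpha-\eps\,\beta(s)d_{\min}(s)$, strictly positive for $\eps$ small uniformly in $s$; taking $k_1$ large then dominates the remaining bounded terms and yields $LV\le-\alpha$ on $D_\eps^c\cap(0,1)^N\times S$. This Poisson-equation step is exactly how the averaged hypothesis is converted into the pointwise drift condition, and it is the idea missing from your proposal.
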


\begin{proof}

 Let us choose $0<\eps<1$, and consider the following bounded open subset:

$$D_{\eps}=\left(\eps, 1-\eps\right)^N \subset(0,1)^N.$$
Then $\bar D_\eps \subset(0,1)^N$. 
 Since the diffusion matrix $g(x,s)$ is a diagonal matrix with entries $\sigma^2_i(x_i(s))(1-x_i(s))^2b_i(x,s)^2$, the condition i) in Lemma \ref{lemma:lyap} is easily verified. 
 
 We need to verify condition ii).
Since the matrix $Q$ is irreducible, there exists a solution $\omega=(\omega_1, \ldots, \omega_m)^T$ of the Poisson system 

\begin{equation}\label{eq:Qw}
 \tilde \alpha-Q \omega=(\pi^T \tilde \alpha, \ldots, \pi^T \tilde \alpha)^T,   
\end{equation}
where $\tilde \alpha=(\tilde \alpha(1), \ldots, \tilde \alpha(m))^T$ \cite[Lemma 2.3]{khasminskii2007stability}. Then, let $\bar \omega$ be a constant such that $\omega_s+\bar \omega>0$ for all $s \in S$, and consider the function
$V(\cdot,s): D^c \cap (0,1)^N \to \mathbb{R}$,

\begin{equation*}
V(x,s)= - k_1  \sum_{i=1}^N \log(x_i) - \sum_{i=1}^N \log(1-x_i) + k_1(\omega_s+ \bar \omega),  \qquad s=1, \ldots, m.  
\end{equation*}
where $k_1>0$. Denote $V_1=- k_1  \sum_{i=1}^N \log(x_i)$, and $V_2=- \sum_{i=1}^N \log(1-x_i) $. By the generalized It\^o formula

\begin{align*}
  LV_1 &= -k_1 \sum_{i=1}^N\frac{\beta(s)(1-x_i)b_i(x,s)-\delta x_i}{x_i} + \frac{k_1}{2}  \sum_{i=1}^N \frac{\sigma_i(x,s)^2b_i(x,s)^2(1-x_i)^2}{x_i^2}\\
   &  \leq - k_1\left[\frac{\sum_{i=1}^N \beta(s)(1-x_i)b_i(x,s)}{x_i}- \delta N  +\frac{1}{2} M(s)^2 \lambda_1(s)^2 N \right]
\end{align*}

\begin{align*}
 LV_2 &= \sum_{i=1}^N\frac{\beta(s)(1-x_i)b_i(x,s)-\delta x_i}{1-x_i} + \frac{1}{2} \sum_{i=1}^N \frac{\sigma_i(x,s)^2b_i(x,s)^2(1-x_i)^2}{(1-x_i)^2}\\
 & \leq  \beta(s)\lambda_1(s) N- \sum_{i=1}^N \frac{\delta x_i}{1-x_i} + \frac{1}{2} M(s)^2 \lambda_1(A)^2 N 
\end{align*}

From which

\begin{align}\label{LVerg}
LV(x,s) &\leq - k_1\left[\frac{\sum_{i=1}^N \beta(s)(1-x_i)b_i(x,s)}{x_i}- \delta N  +\frac{1}{2} M(s)^2 \lambda_1(s)^2 N \right] \\    
&+ \beta(s)\lambda_1(s) N- \sum_{i=1}^N \frac{\delta x_i}{1-x_i} + \frac{1}{2} M(s)^2 \lambda_1(A)^2 N  + k_1 \sum_{r=1}^m q_{sr} \nonumber \omega_r
\end{align}

At this point, we can have two cases: 

\emph{Case 1}: there exists $j$ such that $x_j > 1- \eps$. Since
$$\frac{-\delta x_j}{(1-x_j)} \to -\infty, \quad \text{as} \quad x_j \to 1,$$
we have from \eqref{LVerg} that, for a sufficient small $\eps$,  $LV(x,s) \leq - \alpha$, for some $\alpha >0$.

\emph{Case 2}: Suppose that for all $i$, $x_i < 1- \eps$, then there exist $j$ such that $x_j < \eps$.
Without loss of generality we assume that $x_j=\displaystyle \min_{i} x_i$. 
Then 
\begin{align}\label{LVerg2}
    LV(x,s) &\leq -k_1 \left[ \beta(s)(1-x_j)\frac{\sum_{j=1}^m a_{jl}x_l}{x_j} -\delta N-\frac{1}{2} M(s)^2 \lambda_1(A)^2 N-\sum_{r=1}^m q_{sr}\omega_r  \right]\\ \nonumber
    & + \beta(s)\lambda_1(s) N+ \frac{1}{2} M(s)^2 \lambda_1(A)^2 N\\ \nonumber
   & \leq -k_1 \left[ (1-\eps) \beta(s) d_{min}(s)-\delta N-\frac{ M(s)^2 \lambda_1(A)^2 N}{2} -\sum_{r=1}^m q_{sr}\omega_r \right]\\
   & + \beta(s)\lambda_1(s) N+ \frac{1}{2} M(s)^2 \lambda_1(A)^2 N. \nonumber
\end{align}
Since $\sum_{s=1}^m \pi_s \tilde \alpha(s)>0 $, from \eqref{eq:Qw}, we have
$$\beta(s) d_{min}(s)-\delta N-\frac{ M(s)^2 \lambda_1(A)^2 N}{2}-\sum_{r=1}^m q_{sr}\omega_r>0.$$
Now, we can choose a constant $\eps >0$ sufficiently small such that
$$(1-\eps)\beta(s) d_{min}(s)-\delta N-\frac{ M(s)^2 \lambda_1(A)^2 N}{2}-\sum_{r=1}^m q_{sr}\omega_r>0,$$
and $k_1>0$ big enough to have, from \eqref{LVerg2}, 
$LV(x,s) \leq -\alpha$, for some $\alpha>0$. Thus, also condition ii) in Lemma \ref{lemma:lyap} is verified for $D_\eps$, and we can conclude that $(x(t),s(t))$ is positive recurrent, has a unique stationary distribution $\nu$ on $\cH$, and \eqref{erg} holds.
\end{proof}


\section{Numerical experiments}

In this section, we provide some numerical investigations of the obtained results, based on our theoretical assumptions.  In all numerical experiments in the following, we consider a right-continuous Markov chain with state space $S=\{1,2\}$, thus we study the random switching of \eqref{ito} between two subsystems. In particular, we analyze the case where in the subsystem corresponding to $s=1$ the epidemic goes extinct a.s., while the subsystem in $s=2$ is stochastically permanent. Let us note that, from \cite[Theorem 5]{bonaccorsi2016epidemics}, a sufficient condition ensuring stochastic permanence of the subsystem in state $s$ is
\begin{equation*}\label{condlambda1}
- \delta(s) + \beta(s) \lambda_1(s) - \frac{M(s)^2 \lambda_1(s)^2} {32}>0.
\end{equation*}
Clearly, if $\bar \alpha(s) >0$, the above condition holds.\\
In Fig.~\ref{fig:reg} a) and b),
we depict the behaviour of the sample path solution norm of \eqref{ito} (simulated by the Euler–Maruyama method \cite{higham2001}). We consider the random switching of the system between two different sets of parameters, and two regular networks with $N=100$ nodes, one with degree $d=10$, when the system lies in the state $s=1$, and the other with $d=25$, in $s=2$.  
  In the same plot, we also report the sample path of the Markov chain $s(t)$.
  \\
Specifically, in Fig.~\ref{fig:reg} a), we consider $q_{12}=0.2$, and $q_{21}=0.7$, consequently $\pi=(\pi_1, \pi_2)=\left(\frac{7}{9}, \frac{2}{9}\right)$. The parameter values
of system \eqref{ito}, in $s=1$, are
\begin{equation*}
\beta(1)=0.01, \qquad \delta(1)=1, \qquad M(1)=0.1, 
\end{equation*}
 while in $s=2$ 
\begin{equation*}
\beta(2)=0.07, \qquad \delta(2)=1, \qquad M(2)=0.05,
\end{equation*}
Thus, 
$$\sum_{s=1}^N \pi_s \alpha(s)=-0.4982 <0, 
$$
and by Theorem \ref{thm:ext}, as the result of Markovian switching, the epidemics will go extinct almost surely, in the long run. \\
In Fig.~\ref{fig:reg} b), instead, we consider $q_{12}=1$, and $q_{21}=0.3$, consequently $\pi=(\pi_1, \pi_2)=\left(\frac{3}{13}, \frac{10}{13} \right)$. The parameter values
of system \eqref{ito}, in $s=1$, are 
\begin{equation*}
\beta(1)=0.09, \qquad \delta(1)=1, \qquad M(1)=0.15, 
\end{equation*}
while in $s=2$  
\begin{equation*}
\beta(2)=0.1, \qquad \delta(2)=1, \qquad M(2)=0.08.
\end{equation*}
Thus, 
$$\sum_{s=1}^N \pi_s \bar \alpha(s)= 1.0184>0,$$
and by Theorem \ref{thm:perm}, as the result of Markovian switching, the overall behavior of system \eqref{ito} is stochastically permanent. 

 
\begin{figure}[h!]
 \centering
   \begin{minipage}{0.45\textwidth}
    \includegraphics[width=\textwidth,height=5cm]{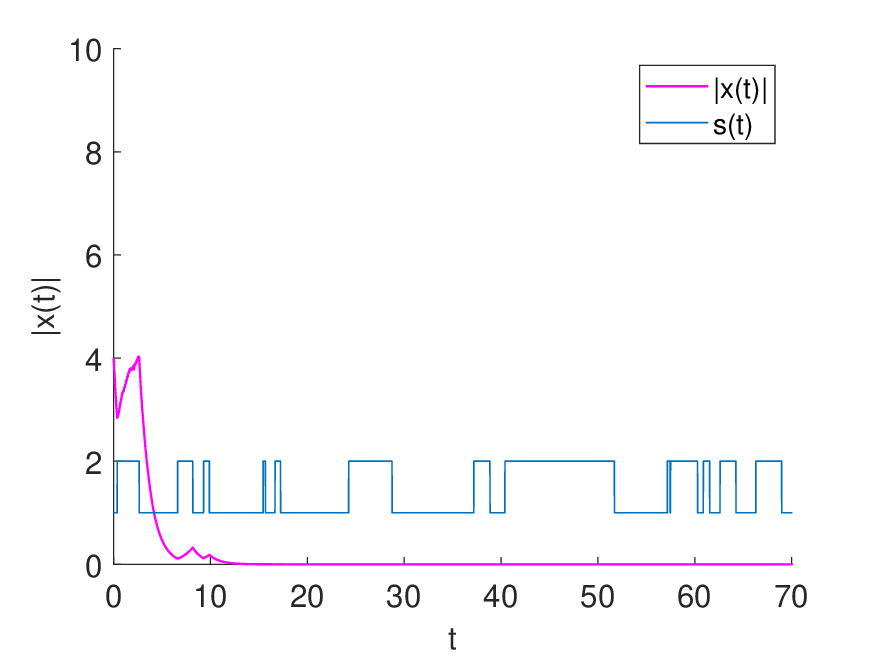}\put(-210,130){a)}
\end{minipage}
 \hskip4mm
   \begin{minipage}{0.45\textwidth}
    \includegraphics[width=\textwidth,height=5cm]{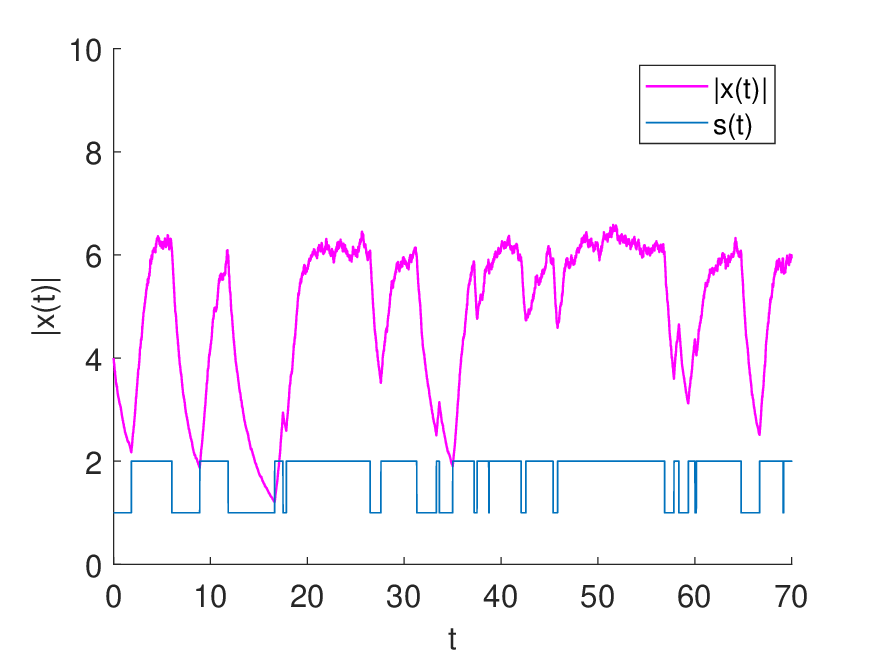}\put(-210,130){b)}
\end{minipage}
    \caption{Dynamics of the sample path solution norm of \eqref{ito} and the Markov chain $s(t)$, with $x(0)=0.4\cdot{\bf 1}$. Regular network with $N=100$ and degree $d=10$ in $s=1$, and with degree $d=25$, in $s=2$.  a) $s(0)=1$, $q_{12}=0.2$, $q_{21}=0.7$ b) $s(0)=1$, $ q_{12}=1$, $q_{21}=0.3$.}
    \label{fig:reg}
  \end{figure}

As a second scenario, we consider the random switching of \eqref{ito} between two \ER sample graphs with $N=100$ and $p=0.7, 0.2$, respectively.
\\
In Fig.~\ref{fig:erdos_ext} a) we
consider $q_{12}=0.4$, $q_{21}=0.8$, consequently $\pi=(\pi_1, \pi_2)=\left(\frac{2}{3}, \frac{1}{3}\right)$. The parameter values
of system \eqref{ito}, in $s=1$, are :
\begin{equation*}
\beta(1)=0.01, \qquad \delta(1)=1, \qquad M(1)=0.02, 
\end{equation*}
while in $s=2$  
\begin{equation*}
\beta(2)=0.065, \qquad \delta(1)=1, \qquad M(2)=0.1,
\end{equation*}
Thus, 
$$\sum_{s=1}^N \pi_s \alpha(s)=-0.0042<0,
$$
and by Theorem \ref{thm:ext}, as the result of Markovian switching, the epidemics will go extinct almost surely, in the long run. 

In Fig.~\ref{fig:erdos_perm}, we consider the switching between two \ER sample graphs with $N=100$ and $p=0.7, 0.4$, respectively.
In a), we have $q_{12}=0.1$, and $q_{21}=0.15$, consequently $\pi=(\pi_1, \pi_2)=\left(\frac{3}{5},\frac{2}{5} \right)$. The parameter values
of system \eqref{ito} in $s=1$ are 
\begin{equation*}
\beta(1)=0.01, \qquad \delta(1)=0.85, \qquad M(1)=0.03, 
\end{equation*}
while in $s=2$
\begin{equation*}
\beta(2)=0.06, \qquad \delta(2)=0.85, \qquad M(2)=0.04.
\end{equation*}
Thus, 
$$\sum_{s=1}^N \pi_s \bar \alpha(s)=0.08>0,$$
and by Theorem \ref{thm:perm}, as the result of Markovian switching, the overall behavior of system is stochastically permanent.
\ste{In Fig.~\ref{fig:erdos_perm} b), instead, we consider $q_{12}=1$, and $q_{21}=1.5$, which are proportional to those in Fig.~\ref{fig:erdos_perm} a),
 and the same parameter values of a). Let us note that this leads to have the same values of $\pi$, and $\sum_{s=1}^N \pi_s \bar \alpha(s)$, as in Fig.~\ref{fig:erdos_perm} a), thus the overall behavior of the system is stochastically permanent.
{From the comparison between Fig.~\ref{fig:erdos_perm} a) and b), we can see how the mean sojourn times in each state $s$, given by $1/q_{ss}$, influences the overall behaviour of the system}. Indeed, in Fig.~\ref{fig:erdos_perm} a) it is evident that the system can stay longer in state $s=1$, and the infection has the time to slow down and to reach a low epidemic level, before the switching to state $s=2$ triggers another aggressive epidemic wave. }
 
\begin{figure}[ht]
 \centering
\includegraphics[width=0.45\textwidth]{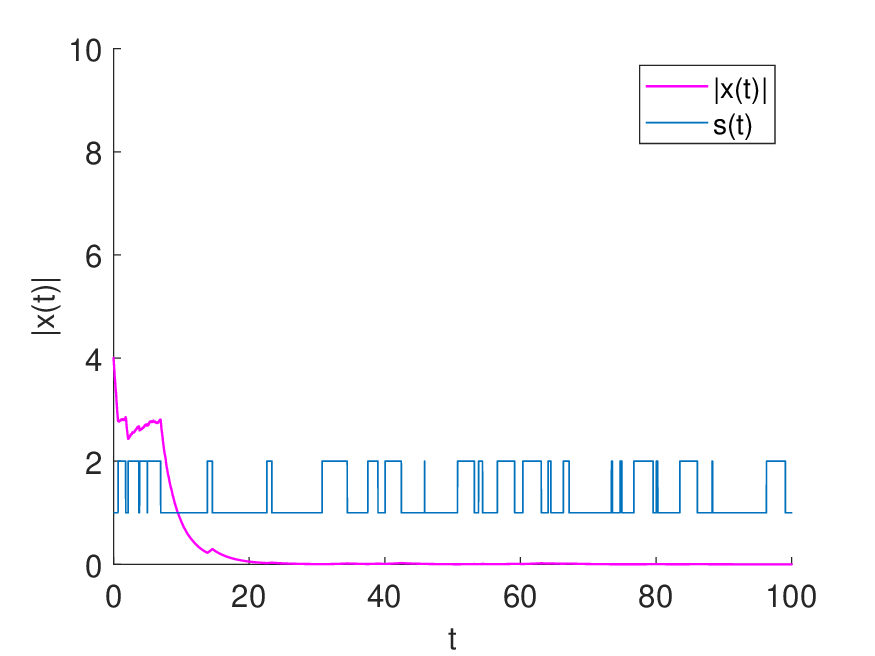}
\caption{Dynamics of the sample path solution norm of \eqref{ito}, and the Markov chain $s(t)$, with $x(0)=0.4\cdot{\bf 1}$, and $s(0)=1$, $q_{12}=0.4$, $q_{21}=0.8$. \ER sample graphs with $N=100$ and $p=0.7$ in $s=1$, and $p=0.2$ in $s=2$. }
    \label{fig:erdos_ext}
   \end{figure}
  
 \begin{figure}[ht]
 \centering
   \begin{minipage}{0.45\textwidth}
    \includegraphics[width=\textwidth,height=5cm]{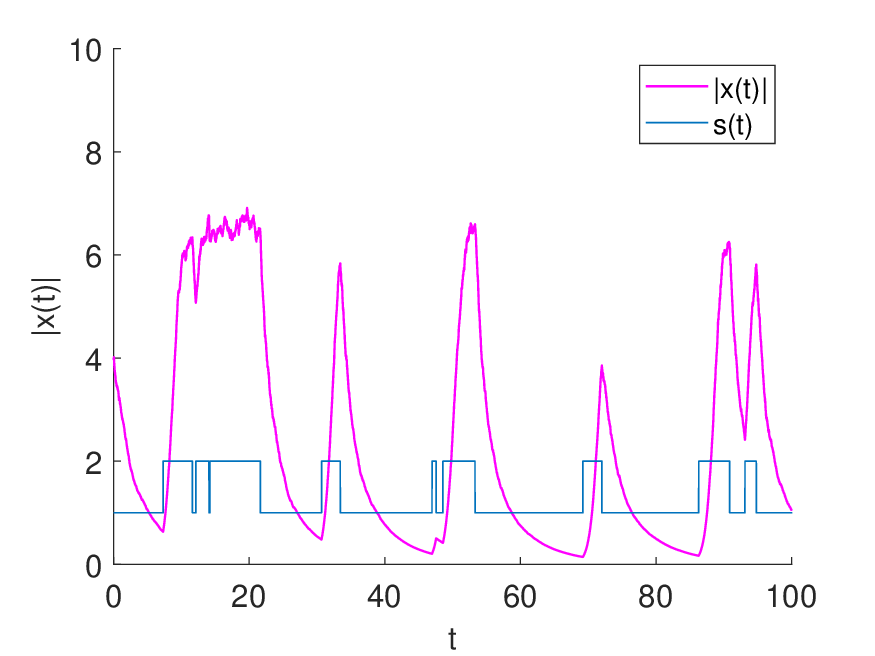}\put(-210,130){a)}
\end{minipage}
 \hskip4mm
   \begin{minipage}{0.45\textwidth}
    \includegraphics[width=\textwidth,height=5cm]{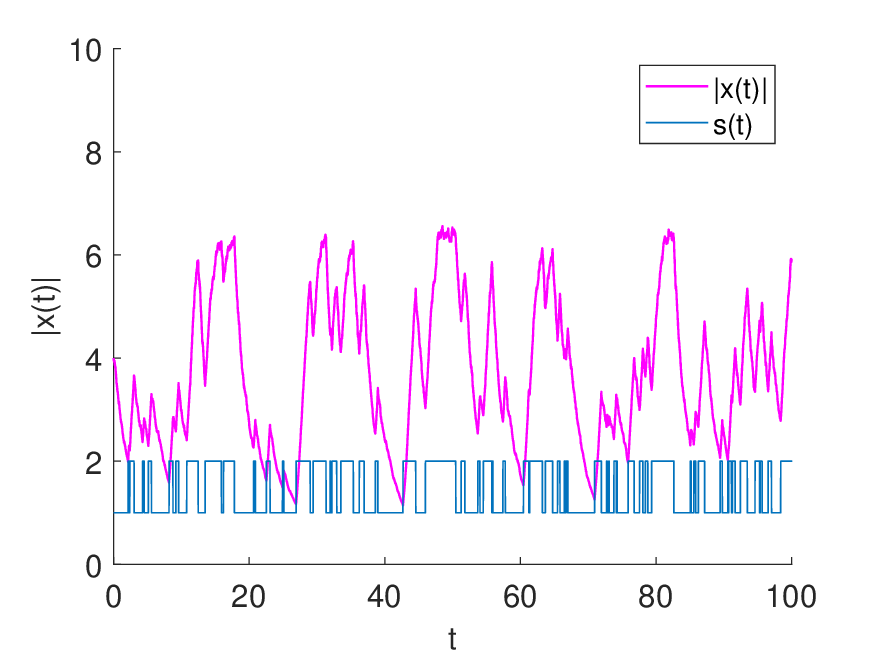}\put(-210,130){b)}
\end{minipage}
    \caption{Dynamics of the sample path solution norm of \eqref{ito} and the Markov chain $s(t)$, with $x(0)=0.4\cdot{\bf 1}$. \ER sample graphs with $N=100$ and $p=0.7$ in $s=1$, and $p=0.4$ in $s=2$. a) $s(0)=1$, $q_{12}=0.1$, $q_{21}=0.15$ b) $s(0)=1$, $ q_{12}=1$, $q_{21}=1.5$.}
    \label{fig:erdos_perm}
   \end{figure}

 \begin{figure}[ht]
 \centering
\includegraphics[width=0.45\textwidth]{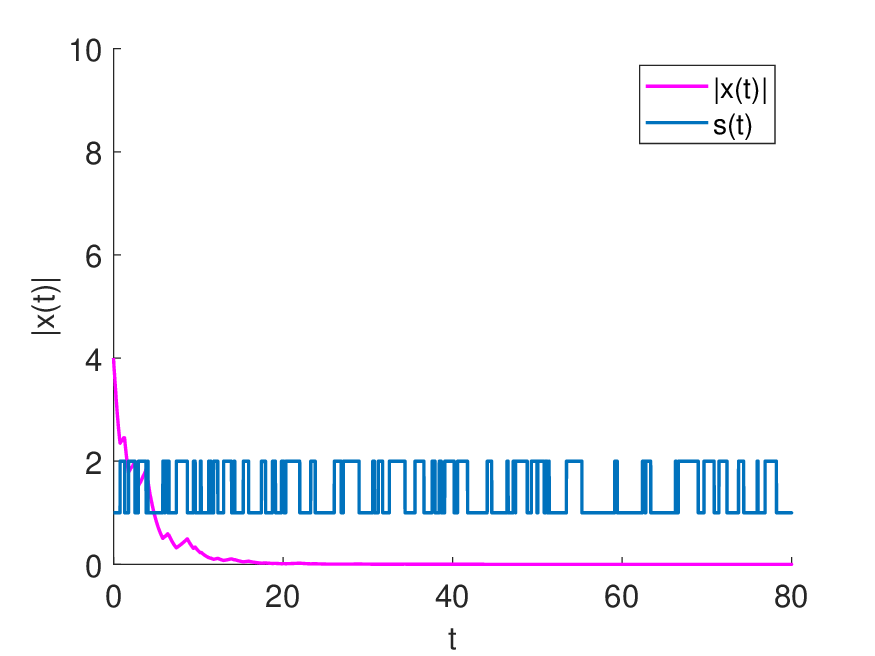}
\caption{Dynamics of the sample path solution norm of \eqref{ito}, and the Markov chain $s(t)$, with $x(0)=0.4\cdot{\bf 1}$, and $s(0)=1$, $q_{12}=1$, $q_{21}=1.5$. \ER sample graphs with $N=100$ and $p=0.2$ in $s=1$ and $s=2$. }
    \label{fig:erdos_ext2}
   \end{figure}  
   
 \rev{ In Fig.~\ref{fig:erdos_ext2}, we consider the same parameters and same transition rates as in Fig.~\ref{fig:erdos_perm} b), but we have for both states an \ER sample graph with $N=100$ and $p=0.2$.
  In this case, in each state the condition for extinction is satisfied and, consequently, the epidemic will go extinct almost surely, indeed 
 $$ \sum_{s=1}^N \pi_s \alpha(s)=-0.2135<0.$$
 Thus, we see that by keeping the model parameters fixed and changing only the network topology (with respect to Fig.~\ref{fig:erdos_perm} b)), the overall system long-term behavior changes, passing from a situation of stochastic permanence to the a.s. extinction. This clearly supports the actual fact that deleting some links between individuals (e.g, through lockdown or self-quarantines) can help to stop an epidemic.}
 

\section{Conclusion}
 In this paper, we have investigated a random-switching diffusion SIS model on networks, incorporating also heterogeneity in the transmission of the infection. Specifically, we have studied the dynamic behaviour of the hybrid system \eqref{ito}, that is composed of $m$ subsystems, and switches between them according to the law of the Markov chain.
 In this way, we have extended our study in \cite{bonaccorsi2016epidemics}. 
 
 In Theorem \ref{thm:01}, we have proved that the system \eqref{ito} possesses a unique global solution that remains within $(0, 1)^N$ a.s., whenever it starts from this region. 
 Theorems \ref{thm:ext} and \ref{thm:perm} provide sufficient conditions 
 for the almost sure extinction and the stochastic permanence of \eqref{ito}, respectively.
 Moreover, we can see that if $\alpha(s)=  - \delta(s) + \beta(s) \lambda_1(s) + \frac{M(s)^2 \lambda_1(s)^2} {32} <0$, for all $s \in S$, in each subsystem the epidemic goes extinct, and from Theorem \ref{thm:ext}, as we can expect, in the overall system \eqref{ito} the epidemic goes extinct. On the other hand, if $\bar \alpha(s)= - \delta(s) + \beta(s) d_{\min}(s) -  \frac{M(s)^2 \lambda_1(s)^2} {32}>0$ for all $s \in S$, each subsystem is stochastically permanent, and from Theorem \ref{thm:perm}, the overall system remains stochastically permanent. We remember that, from \cite[Theorem 5]{bonaccorsi2016epidemics}, a sufficient condition for stochastic permanence of each subsystem is $ - \delta(s) + \beta(s) \lambda_1(s) -  \frac{M(s)^2 \lambda_1(s)^2} {32}>0$. 
 However, Theorems \ref{thm:ext} and \ref{thm:perm} show us a more interesting behaviour, that is when some subsystems are stochastically permanent,
while in others a stochastic extinction is predict, as the results of Markovian switching, the overall epidemic can persist or go extinct a.s., which depends on the sign of $\sum_{s=1}^m \pi_s \bar \alpha(s)$ and $\sum_{s=1}^m \pi_s  \alpha(s)$, respectively. 
\rev{We can see that the 
stationary distribution of the Markov chain plays a
crucial role in the stochastic epidemic process. Thus, reasonably, the overall system will tend to the extinction a.s. if the process has higher probability to stay in regimes that predict the extinction. Moreover, in each state, one can control the epidemic by lowering the infection rate (e.g., by wearing face-masks) and/or reducing the spectral radius of the adjacency matrix (by deleting some links in the network, through e.g., lockdown or self-quarantines). Thus, it is evident also the relevant role of the network topology, the change of which can possibly help to reduce the epidemic, even if the other parameters remain fixed.} 

In Theorem \ref{thm:comp}, \so{an asymptotic bound for the time average
of the solution sample path norm has been provided, under the conditions ensuring stochastic permanence. Moreover, the obtained bound allowed us to prove the existence of an invariant probability measure for the process $(x(t),s(t))$ on $(0,1)^N$, if the condition of stochastic permanence holds.} \ste{ Under different condition, we have proved the positive recurrence of $(x(t),s(t))$ and its ergodic properties in $(0,1)^N$. Finally, we have corroborated the obtained results by numerical examples}. \\
 \rev{As future research, it would be interesting to consider switching processes that
depend on the continuous dynamics. Although more difficult to analytically handle,
this approach allows to represent, for example, the interplay between
node state and network dynamics. Indeed, in human disease epidemics, the structure of contacts
often changes in response to the contagion, which in turn influences the spreading process itself in a
nontrivial feedback loop.}
    
\section*{Acknowledgments}  
No potential competing interest was reported by the authors.\\
\rev{The authors would like to thank the referees and the Associate Editor for their helpful comments and suggestions}.


\providecommand{\bysame}{\leavevmode\hbox to3em{\hrulefill}\thinspace}
\providecommand{\MR}{\relax\ifhmode\unskip\space\fi MR }
\providecommand{\MRhref}[2]{%
  \href{http://www.ams.org/mathscinet-getitem?mr=#1}{#2}
}
\providecommand{\href}[2]{#2}

\end{document}